\bmdefine{\sss}{s}
\newcommand{\NNN}{\mathbb{N}}
\newcommand{\ZZZ}{\mathbb{Z}}
\newcommand{\RRRRR}{{\mathcal R}}
\newcommand{\TTTTT}{{\mathcal T}}
\newcommand{\mmmm}{{\mathfrak{m}}}
\newcommand{\covers}{\mathrel{\cdot\!\!\!>}}
\newcommand{\covered}{\mathrel{<\!\!\!\cdot}}
\newcommand{\define}{\mathrel{:=}}
\newcommand{\gor}{Gorenstein}
\newcommand{\agor}{almost Gorenstein}
\newcommand{\Agor}{Almost Gorenstein}
\newcommand{\cm}{Cohen-Macaulay}
\newcommand{\noeth}{Noetherian}
\newcommand{\qed}{\nolinebreak\rule{.3em}{.6em}}
\newcommand{\joinirred}{join-irreducible}
\newcommand{\rank}{\mathrm{rank}}
\newcommand{\join}{\vee}
\newcommand{\image}{{\mathrm{Im}}}
\newcommand{\cok}{{\mathrm{Cok}}}
\newcommand{\starcpx}{{\mathrm{star}}}
\newcommand{\imply}{\Longrightarrow}
\newcommand{\condn}{{condition N}}
\newcommand{\scn}{{sequence with condition N}}
\newcommand{\rmax}{{r_{\max}}}
\newcommand{\irseq}{{irredundant sequence}}
\newcommand{\mylabel}{\label}
\newtheorem{thm}{Theorem}[section]
\newtheorem{fact}[thm]{Fact}
\newtheorem{lemma}[thm]{Lemma}
\newtheorem{cor}[thm]{Corollary}
\newtheorem{definition}[thm]{Definition}
\newtheorem{remark}[thm]{Remark}
\numberwithin{equation}{section}
\newcounter{cond}
\numberwithin{cond}{section}
\newcommand{\mysloppy}{\tolerance 9999 \hfuzz .5\p@ \vfuzz .5\p@}
\title{%
Almost \gor\ Hibi rings}
\author{Mitsuhiro MIYAZAKI\footnote{%
The author is supported partially by 
JSPS KAKENHI Grant Number 15K04818.}%
}
\date{%
Dept.\ Math.,
Kyoto University of Education,
\\
Fushimi-ku, Kyoto, 612-8522, Japan}
\begin{document}
\sloppy

\maketitle

\begin{abstract}
In this paper, we state criteria of a Hibi ring to be 
level, non-\gor\ and \agor\ and to be non-level and \agor\
in terms of the structure of the partially ordered set defining the Hibi ring.
We also state a criterion of a ladder determinantal ring defined by 2-minors
to be non-\gor\ and \agor\ in terms of the shape of the ladder.
\\
Keywords:
\agor, Hibi ring, level ring, ladder determinantal ring
\\
MSC:13H10, 13F50, 13A02
\end{abstract}

\section{Introduction}

\cm\ and \gor\ properties are traditional and very important notions in 
commutative ring theory.
First, \cm\ property of a \noeth\ ring is defined as a ring which satisfies unmixedness theorem
and/or other equivalent conditions.
After that, Bass \cite{bas} defined \gor\ property for \noeth\ rings.
A \noeth\ local ring is by definition \gor\ if its self injective dimension is finite and
a general \noeth\ ring is \gor\ if the localized ring by any prime ideal of it is a 
\gor\ local ring.

Since then, researchers have accumulated the theories concerning \cm\ and \gor\ rings and had
feeling that in some theories, assuming only \cm\ property is to weak to deduce interesting results
while \gor\ property is to strong so that only trivial results can be deduced.
Therefore, there are efforts to find and formulate a 
nice class of rings to fill the gap between \cm\ and
\gor\ properties.

The notion of \agor\ property is an outcome of these efforts.
First Barucci and Fr\"oberg \cite{bf} defined \agor\ property for 1-dimensional analytically 
unramified local rings.
After that, Goto, Matsuoka and Phuong
\cite{gmp} generalized the notion of \agor\ property to arbitrary 1-dimensional 
local rings.
Further, Goto, Takahashi and Taniguchi
\cite{gtt} generalized the notion of \agor\ property to arbitrary local rings 
and graded rings.

In this paper, we study \agor\ property of Hibi rings and completely
characterize non-\gor\ \agor\ graded Hibi rings in terms of the structure
of partially ordered sets.
Let $k$ be a field, $H$ a finite distributive lattice, $P$ the set of 
\joinirred\ elements of $H$ and $\RRRRR_k(H)$ the Hibi ring over 
$k$ on $H$.
Then $\RRRRR_k(H)$ is non-\gor\ and \agor\ if and only if,
after deleting elements $x\in P$ with $\starcpx_P(x)=P$, $P$ is one of the 
following forms.
\begin{enumerate}
\item
\label{item:int level agor}
Disjoint union of an element and a chain with length at least 1.
\item
\label{item:int non level agor}
A partially ordered set
 obtained by adding one of the following covering relations to disjoint union
of 2 chains with the same length of at least 2.
\begin{enumerate}
\item
\label{item:int non level sym}
The bottom element of each chain is covered by the top element of the other chain.
\item
\label{item:int non level pure}
The bottom element of the second chain is covered by the top element of the first chain
and there are integers $i_1$, \ldots, $i_p$ such that the $i_j$-th element of the first chain from
the bottom is covered by the $(i_j+1)$-th element of the second chain from the bottom for 
each $j=1$, \ldots, $p$
($p$ may be 0 in this case).
\end{enumerate}
\end{enumerate}
$$
\vcenter{
\begin{picture}(50,160)
\put(10,90){\circle*{4}}

\put(40,30){\circle*{4}}
\put(40,50){\circle*{4}}
\put(40,70){\circle*{4}}

\put(40,110){\circle*{4}}
\put(40,130){\circle*{4}}
\put(40,150){\circle*{4}}

\put(40,30){\line(0,1){50}}
\put(40,100){\line(0,1){50}}

\multiput(40,82)(0,2){9}{\makebox(0,0){$\cdot$}}

\put(25,10){\makebox(0,0){\ref{item:int level agor}}}

\end{picture}
\hfill
\begin{picture}(50,160)
\put(10,30){\circle*{4}}
\put(10,50){\circle*{4}}
\put(10,70){\circle*{4}}

\put(10,110){\circle*{4}}
\put(10,130){\circle*{4}}
\put(10,150){\circle*{4}}

\put(40,30){\circle*{4}}
\put(40,50){\circle*{4}}
\put(40,70){\circle*{4}}

\put(40,110){\circle*{4}}
\put(40,130){\circle*{4}}
\put(40,150){\circle*{4}}

\put(10,30){\line(0,1){50}}
\put(10,100){\line(0,1){50}}

\put(40,30){\line(0,1){50}}
\put(40,100){\line(0,1){50}}

\put(10,30){\line(1,4){30}}
\put(10,150){\line(1,-4){30}}

\multiput(10,82)(0,2){9}{\makebox(0,0){$\cdot$}}
\multiput(40,82)(0,2){9}{\makebox(0,0){$\cdot$}}

\put(25,10){\makebox(0,0){\ref{item:int non level sym}}}

\end{picture}
\hfill
\begin{picture}(50,160)
\put(10,30){\circle*{4}}
\put(10,50){\circle*{4}}
\put(10,70){\circle*{4}}

\put(10,110){\circle*{4}}
\put(10,130){\circle*{4}}
\put(10,150){\circle*{4}}

\put(40,30){\circle*{4}}
\put(40,50){\circle*{4}}
\put(40,70){\circle*{4}}

\put(40,110){\circle*{4}}
\put(40,130){\circle*{4}}
\put(40,150){\circle*{4}}

\put(10,30){\line(0,1){50}}
\put(10,100){\line(0,1){50}}

\put(40,30){\line(0,1){50}}
\put(40,100){\line(0,1){50}}

\put(10,30){\line(3,2){30}}
\put(10,110){\line(3,2){30}}
\put(10,150){\line(1,-4){30}}

\multiput(10,82)(0,2){9}{\makebox(0,0){$\cdot$}}
\multiput(40,82)(0,2){9}{\makebox(0,0){$\cdot$}}

\put(25,10){\makebox(0,0){\ref{item:int non level pure}}}

\end{picture}
}
$$
See Theorems \ref{thm:level main} and \ref{thm:non level main}.

We also state a criterion of non-\gor\ \agor\ property of ladder
determinantal rings defined by 2-minors, since (ladder) determinantal
rings defined by 2-minors have structures of Hibi rings.
See Theorem \ref{thm:ladder}.

This paper is organized as follows.
After establishing basic materials used in the main argument of this paper
in Section \ref{sec:prel},
we state a criterion of a Hibi ring to be level, non-\gor\ and \agor\ graded ring in 
Section \ref{sec:level}.
Further, we state a criterion of a Hibi ring to be non-level and \agor\ graded ring in
Section \ref{sec:non level}.
We show that \ref{item:int level agor} above corresponds to the level case and
\ref{item:int non level agor} to the non-level case.
Finally in Section \ref{sec:ladder}, we state a criterion of a ladder determinantal
ring defined by 2-minors to be non-\gor\ and \agor.

\section{Preliminaries}
\label{sec:prel}

In this section, we collect some known facts and state basic facts in order to
prepare the main argument of this paper.

In this paper, all rings and algebras are assumed to be commutative 
with an identity element.
We denote by $\NNN$ the set of non-negative integers,
by $\ZZZ$ the set of integers.
Let $k$ be a field, $R$ a \cm\ standard graded $k$-algebra, i.e.,
$R=\bigoplus_{n\geq0}R_n$, $R_0=k$ and $R=k[R_1]$.
We set $d=\dim R$, $a=a(R)$, where $a(R)$ is the $a$-invariant of $R$.
We denote the irrelevant maximal ideal $\bigoplus_{n>0}R_n$ of $R$ by $\mmmm$,
the graded canonical module of $R$ by $K_R$.

For a graded finitely generated $R$-module $M$, we denote the Hilbert series
$\sum_{i}(\dim_k M_i)\lambda^i$ by $[[M]]$.
We set $(h_0,h_1,\ldots, h_s)$, $h_s\neq 0$, the $h$-vector of $R$, i.e.,
$$
[[R]]=\frac{h_0+h_1\lambda+\cdots+h_s\lambda^s}{(1-\lambda)^d}.
$$
It is known that $s=a+d$.
We also denote the number of elements in a minimal system of generators of $M$
by $\mu(M)$ and the multiplicity of $M$ with respect to $\mmmm$ by $e(M)$.
It is known that $\mu(M)\leq e(M)$.
Further the following fact is known.

\begin{lemma}
\label{lem:mum em}
Let $M$ be a \cm\ non-negatively graded $R$-module with $\dim M=u$.
Then there is a polynomial $F(\lambda)\in\ZZZ[\lambda]$ such that
$$
[[M]]=\frac{F(\lambda)}{(1-\lambda)^u}.
$$
Further if we set $F(\lambda)=\sum_{i}c_i\lambda^i$, then
$$
\dim_k [M/\mmmm M]_i\leq c_i
$$
for any $i$.
Moreover, if $\mu(M)=e(M)$, then the equality holds for any $i$.
\end{lemma}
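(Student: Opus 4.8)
The plan is to reduce $M$ modulo a linear system of parameters and read off every assertion from the resulting Artinian quotient. First I would arrange that $k$ is infinite: base change along $k\to k(t)$ (or to the algebraic closure) leaves the Hilbert series $[[M]]$, the invariants $\mu(M)$ and $e(M)$, the dimension $u$, and each number $\dim_k[M/\mmmm M]_i$ unchanged, and it preserves the \cm\ property, so this reduction is harmless. With $k$ infinite I can choose degree-one forms $\theta_1,\ldots,\theta_u\in R_1$ forming a system of parameters for $M$; because $M$ is \cm\ of dimension $u$, this sequence is automatically $M$-regular.

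Next, set $M^{(0)}=M$ and $M^{(j)}=M/(\theta_1,\ldots,\theta_j)M$, and write $\bar M:=M^{(u)}$. Each $\theta_{j+1}$ is a nonzerodivisor of degree $1$ on $M^{(j)}$, so the short exact sequences $0\to M^{(j)}(-1)\xrightarrow{\theta_{j+1}} M^{(j)}\to M^{(j+1)}\to 0$ give $[[M^{(j+1)}]]=(1-\lambda)[[M^{(j)}]]$, whence $[[\bar M]]=(1-\lambda)^u[[M]]$. Since $\theta_1,\ldots,\theta_u$ is a system of parameters, $\dim\bar M=0$, so $\bar M$ is a finite-dimensional graded $k$-vector space and $[[\bar M]]=\sum_i(\dim_k\bar M_i)\lambda^i$ is a polynomial with non-negative integer coefficients. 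Defining $F(\lambda):=[[\bar M]]$ then yields $[[M]]=F(\lambda)/(1-\lambda)^u$ and identifies $c_i=\dim_k\bar M_i$; this settles the existence of $F$.

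For the inequality, observe that $(\theta_1,\ldots,\theta_u)M\subseteq\mmmm M$ because each $\theta_j\in R_1\subseteq\mmmm$. Hence there is a degree-preserving surjection $\bar M=M/(\theta_1,\ldots,\theta_u)M\twoheadrightarrow M/\mmmm M$, and comparing graded pieces gives $\dim_k[M/\mmmm M]_i\leq\dim_k\bar M_i=c_i$ for every $i$. For the equality clause I would use that, for a \cm\ module, the multiplicity equals the length of the Artinian reduction by a linear system of parameters, i.e.\ $e(M)=\dim_k\bar M=F(1)=\sum_i c_i$ (equivalently this drops out of expanding $F(\lambda)/(1-\lambda)^u$). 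Since $\mu(M)=\dim_k M/\mmmm M=\sum_i\dim_k[M/\mmmm M]_i$, the hypothesis $\mu(M)=e(M)$ forces $\sum_i\dim_k[M/\mmmm M]_i=\sum_i c_i$; combined with the termwise inequalities already proved, each must be an equality, giving $\dim_k[M/\mmmm M]_i=c_i$ for all $i$.

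The computational heart is routine; the one point demanding care is the very first step, namely securing a degree-one system of parameters that is an $M$-regular sequence. This is where both the \cm\ hypothesis (to upgrade ``system of parameters'' to ``regular sequence'', which is exactly what makes the Hilbert-series identity an equality rather than an inequality) and the passage to an infinite residue field (to guarantee such linear forms exist) are genuinely used; everything after that is bookkeeping with Hilbert series and a single surjection.
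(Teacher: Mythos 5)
Your proof is correct. The paper offers no proof of this lemma at all --- it is presented as a known fact --- and your argument (reduce to an infinite field, take a linear system of parameters which is $M$-regular by the \cm\ hypothesis, identify $F(\lambda)$ with the Hilbert series of the Artinian reduction $\bar M$, compare $\bar M$ with $M/\mmmm M$ via the surjection, and use $e(M)=\dim_k\bar M=F(1)$ together with $\mu(M)=\sum_i\dim_k[M/\mmmm M]_i$ to force termwise equality) is exactly the standard one the author is implicitly invoking.
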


Now we recall the definition of an \agor\ ring in our case.

\begin{definition}[{\cite{gtt}}]
\rm
$R$ is an \agor\ graded ring if there exist an exact sequence
$$
0\to R\to K_R(-a)\to C\to 0
$$
of graded $R$-modules such that $C=0$ or $\mu(C)=e(C)$.
(We always assume a homomorphism between graded $R$-modules is of
degree 0.)
\end{definition}
In \cite{gtt}, notions of \agor\ local rings and \agor\ graded rings
are defined.
In this paper, we treat only \agor\ graded rings and just call them \agor\ rings.

Note that if
$$
0\to R\stackrel{\varphi}{\to} K_R(-a)\stackrel{\pi}{\to} C\to 0
$$
is an exact sequence, then
$\varphi(1)$ is a generator of $K_R(-a)$.
Set $\xi_1=\varphi(1)$ and take a minimal homogeneous generating system
$\xi_1$, $\xi_2$, \ldots, $\xi_p$ of $K_R(-a)$.
Then $\pi(\xi_2)$, \ldots, $\pi(\xi_p)$ is a minimal homogeneous
generating system of $C$.
In particular, we see the following fact.

\begin{lemma}
\label{lem:dim k c}
In the above situation, 
$$
\dim_k [K_R(-a)/\mmmm K_R(-a)]_i=\dim_k [C/\mmmm C]_i+\delta_{0,i},
$$
where $\delta_{0,i}$ is the Kronecker's delta.
\end{lemma}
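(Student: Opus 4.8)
The plan is to reduce everything to the graded Nakayama lemma, which identifies $\dim_k[M/\mmmm M]_i$ with the number of degree-$i$ elements in any minimal homogeneous generating system of a finitely generated graded $R$-module $M$. Once this is in hand, the lemma becomes purely a matter of comparing the degrees of the generators of $K_R(-a)$ with those of $C$, and the discussion immediately preceding the statement already supplies compatible generating systems for the two modules.

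First I would invoke that discussion directly. Since $\xi_1,\ldots,\xi_p$ is a minimal homogeneous generating system of $K_R(-a)$, the residues $\bar\xi_1,\ldots,\bar\xi_p$ form a homogeneous $k$-basis of $K_R(-a)/\mmmm K_R(-a)$; likewise $\pi(\xi_2),\ldots,\pi(\xi_p)$, being a minimal homogeneous generating system of $C$, yields a homogeneous $k$-basis of $C/\mmmm C$. Consequently $\dim_k[K_R(-a)/\mmmm K_R(-a)]_i$ equals the number of indices $j\in\{1,\ldots,p\}$ with $\deg\xi_j=i$, while $\dim_k[C/\mmmm C]_i$ equals the number of indices $j\in\{2,\ldots,p\}$ with $\deg\pi(\xi_j)=i$.

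Next I would track the degrees to compare these two counts. Because $\pi$ is a homomorphism of degree $0$, we have $\deg\pi(\xi_j)=\deg\xi_j$ for every $j\geq 2$, so the two counts differ only in whether the single generator $\xi_1$ is included. Here I use that $\varphi$ is of degree $0$ and $1\in R_0$, whence $\xi_1=\varphi(1)$ is homogeneous of degree $0$. Thus $\xi_1$ contributes to $\dim_k[K_R(-a)/\mmmm K_R(-a)]_i$ exactly when $i=0$ and to no other degree; subtracting the two counts leaves precisely the term $\delta_{0,i}$, which is the asserted identity.

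There is no substantive obstacle here: the only points demanding care are the bookkeeping of degrees (confirming $\deg\xi_1=0$ and that $\pi$ preserves degree) and the appeal to graded Nakayama to pass from a minimal homogeneous generating system to a homogeneous basis of $M/\mmmm M$ in each degree. Both are routine once the generating systems furnished by the preamble are used, so the argument is short.
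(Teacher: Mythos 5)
Your argument is correct and coincides with the paper's: the lemma is deduced there from exactly the same observation, namely that $\xi_1=\varphi(1)$ is a degree-$0$ member of a minimal homogeneous generating system $\xi_1,\ldots,\xi_p$ of $K_R(-a)$ whose images $\pi(\xi_2),\ldots,\pi(\xi_p)$ minimally generate $C$, combined with graded Nakayama. Your degree bookkeeping just makes explicit what the paper leaves implicit in the phrase ``In particular.''
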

Note that if $C\neq 0$, then $C$ is a \cm\ $R$-module of dimension $d-1$
\cite[Lemma 3.1]{gtt}.
We also recall the following fact.

\begin{fact}[{\cite[Proposition 2.4]{hig}}]
\label{fac:hilb c}
If $C\neq 0$, then
$$
[[C]]=\frac{\sum_{j=0}^{s-1}((h_s+\cdots+h_{s-j})-(h_0+\cdots+h_j))\lambda^j}
{(1-\lambda)^{d-1}}.
$$
\end{fact}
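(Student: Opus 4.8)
The plan is to compute $[[C]]$ directly from the short exact sequence by additivity of Hilbert series, using the classical duality formula for the Hilbert series of the canonical module. Since Hilbert series are additive on short exact sequences of graded modules, the sequence $0\to R\to K_R(-a)\to C\to 0$ immediately gives
$$
[[C]]=[[K_R(-a)]]-[[R]].
$$
Thus everything reduces to an explicit expression for $[[K_R(-a)]]$ together with a formal manipulation of rational functions in $\lambda$.

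First I would recall that, because $R$ is \cm\ of dimension $d$, graded local duality yields $[[K_R]](\lambda)=(-1)^d[[R]](\lambda^{-1})$. Substituting $[[R]]=(h_0+h_1\lambda+\cdots+h_s\lambda^s)/(1-\lambda)^d$ and using $(1-\lambda^{-1})^d=(-1)^d\lambda^{-d}(1-\lambda)^d$ to clear signs, one finds
$$
[[K_R]](\lambda)=\frac{\sum_{i=0}^{s}h_i\lambda^{d-i}}{(1-\lambda)^d}.
$$
Shifting degrees by $a$ and using $s=a+d$ then turns this into
$$
[[K_R(-a)]](\lambda)=\lambda^a[[K_R]](\lambda)=\frac{\sum_{j=0}^{s}h_{s-j}\lambda^{j}}{(1-\lambda)^d};
$$
that is, the numerator for $K_R(-a)$ is the $h$-vector read in reverse.

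Combining the two displays gives
$$
[[C]]=\frac{\sum_{j=0}^{s}(h_{s-j}-h_j)\lambda^{j}}{(1-\lambda)^d}.
$$
The numerator $N(\lambda)=\sum_{j=0}^{s}(h_{s-j}-h_j)\lambda^j$ vanishes at $\lambda=1$, since $\sum_j h_{s-j}=\sum_j h_j$, so $(1-\lambda)$ divides $N(\lambda)$; this is consistent with $C$ being \cm\ of dimension $d-1$. To finish I would divide by $(1-\lambda)$ explicitly: writing $D_j=(h_s+\cdots+h_{s-j})-(h_0+\cdots+h_j)$, a telescoping comparison of coefficients shows $(1-\lambda)\sum_{j=0}^{s-1}D_j\lambda^j=N(\lambda)$, because the coefficient of $\lambda^j$ on the left is $D_j-D_{j-1}=h_{s-j}-h_j$ for $1\leq j\leq s-1$, while the boundary terms $D_0=h_s-h_0$ and $-D_{s-1}=h_0-h_s$ match those of $N$. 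This yields the asserted formula with denominator $(1-\lambda)^{d-1}$.

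The only genuinely nontrivial ingredient is the duality identity $[[K_R]](\lambda)=(-1)^d[[R]](\lambda^{-1})$; everything else is bookkeeping with the exact sequence and a single polynomial division. Since this identity is standard for \cm\ graded rings, the main task is simply to keep the reindexing $j=d-i$ and the shift by $a$ straight, so that the reversed $h$-vector appears in the numerator of $K_R(-a)$ correctly.
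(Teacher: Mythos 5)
Your derivation is correct: additivity of Hilbert series on $0\to R\to K_R(-a)\to C\to 0$, the duality formula $[[K_R]](\lambda)=(-1)^d[[R]](\lambda^{-1})$ giving the reversed $h$-vector in the numerator of $[[K_R(-a)]]$, and the telescoping division by $(1-\lambda)$ all check out. The paper itself states this only as a quoted Fact with a citation to \cite[Proposition 2.4]{hig} and gives no proof, but your argument is precisely the standard one used there, so there is nothing to add.
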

By Lemmas \ref{lem:mum em} and \ref{lem:dim k c} and Fact \ref{fac:hilb c},
we see the following

\begin{cor}
\label{cor:sym dim}
Suppose that $C\neq 0$ and $\mu(C)=e(C)$. Then
$$\dim_k[C/\mmmm C]_i=\dim_k[C/\mmmm C]_{s-1-i}$$
 for $0\leq i\leq s-1$,
$$\dim_k[K_R(-a)/\mmmm K_R(-a)]_i=\dim_k[K_R(-a)/\mmmm K_R(-a)]_{s-1-i}$$
 for $1\leq i\leq s-2$
and
$$\dim_k[K_R(-a)/\mmmm K_R(-a)]_{s-1}=\dim_k[K_R(-a)/\mmmm K_R(-a)]_{0}-1$$
 if $s\geq 2$.
\end{cor}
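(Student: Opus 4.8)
The plan is to derive all three equalities from the two ingredients provided: Fact~\ref{fac:hilb c}, which gives the Hilbert series of $C$ under the assumption $C\neq 0$, and Lemma~\ref{lem:mum em}, which turns the hypothesis $\mu(C)=e(C)$ into an exact reading of $\dim_k[C/\mmmm C]_i$ off the numerator polynomial. I would begin by applying Lemma~\ref{lem:mum em} to $M=C$. Since $C$ is \cm\ of dimension $d-1$ (by \cite[Lemma 3.1]{gtt}), the lemma produces a numerator polynomial $F(\lambda)=\sum_i c_i\lambda^i$ with $[[C]]=F(\lambda)/(1-\lambda)^{d-1}$, and the moreover clause, available precisely because $\mu(C)=e(C)$, gives $\dim_k[C/\mmmm C]_i=c_i$ for all $i$. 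Comparing with Fact~\ref{fac:hilb c}, I read off
$$
c_j=(h_s+\cdots+h_{s-j})-(h_0+\cdots+h_j)\qquad(0\leq j\leq s-1).
$$

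The first equality is then the statement that $c_i=c_{s-1-i}$. This is the symmetry I expect to verify by a direct telescoping of the partial sums. Writing $c_{s-1-i}=(h_s+\cdots+h_{i+1})-(h_0+\cdots+h_{s-1-i})$ and using that the full sum $h_0+\cdots+h_s$ is common to both the ``top'' and ``bottom'' accumulations, the terms rearrange so that $c_i$ and $c_{s-1-i}$ coincide; concretely both equal $(h_{i+1}+\cdots+h_s)-(h_0+\cdots+h_i)$ after cancellation, so the identity $c_i=c_{s-1-i}$ holds as a formal consequence of the definition of the $c_j$, with no further hypothesis on the $h_i$ needed. This establishes the claim $\dim_k[C/\mmmm C]_i=\dim_k[C/\mmmm C]_{s-1-i}$ for $0\leq i\leq s-1$.

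For the two statements about $K_R(-a)$ I would invoke Lemma~\ref{lem:dim k c}, which says
$$
\dim_k[K_R(-a)/\mmmm K_R(-a)]_i=\dim_k[C/\mmmm C]_i+\delta_{0,i}.
$$
In the range $1\leq i\leq s-2$ both $i$ and $s-1-i$ are at least $1$, so the Kronecker delta contributes $0$ on each side, and the middle symmetry for $K_R(-a)$ reduces immediately to the symmetry for $C$ already proved. For the boundary relation I would take $i=0$: here $\dim_k[K_R(-a)/\mmmm K_R(-a)]_0=\dim_k[C/\mmmm C]_0+1$, while $\dim_k[K_R(-a)/\mmmm K_R(-a)]_{s-1}=\dim_k[C/\mmmm C]_{s-1}$ since $s-1\geq 1$ when $s\geq 2$. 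The $C$-symmetry at $i=0$ gives $\dim_k[C/\mmmm C]_0=\dim_k[C/\mmmm C]_{s-1}$, and subtracting yields exactly $\dim_k[K_R(-a)/\mmmm K_R(-a)]_{s-1}=\dim_k[K_R(-a)/\mmmm K_R(-a)]_0-1$.

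The whole argument is essentially bookkeeping, so there is no serious obstacle; the one point demanding care is the range management around the Kronecker delta. I must make sure that $s-1-i$ stays in $\{1,\ldots,s-2\}$ when $i$ does (so that the delta vanishes at both ends) and that the boundary case is treated separately, which is exactly why the hypothesis $s\geq 2$ is attached only to the last equality. Provided the index ranges are handled correctly, the three displayed identities follow formally from Fact~\ref{fac:hilb c}, Lemma~\ref{lem:mum em}, and Lemma~\ref{lem:dim k c}.
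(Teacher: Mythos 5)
Your argument is correct and is exactly the route the paper intends: the paper gives no written proof of Corollary~\ref{cor:sym dim}, merely stating that it follows from Lemmas~\ref{lem:mum em} and \ref{lem:dim k c} and Fact~\ref{fac:hilb c}, which are precisely the three ingredients you combine in the same way. One small slip worth fixing: the parenthetical claim that $c_i$ and $c_{s-1-i}$ both equal $(h_{i+1}+\cdots+h_s)-(h_0+\cdots+h_i)$ is not right in general --- the common value is $(h_0+\cdots+h_s)-(h_0+\cdots+h_{s-1-i})-(h_0+\cdots+h_i)$ --- but the identity $c_i=c_{s-1-i}$ itself is valid and is proved by exactly the mechanism you describe (subtracting the two lower partial sums from the full sum $h_0+\cdots+h_s$), so the proof stands.
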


Next we recall some basic facts about Hibi rings.
First we fix notation and terminology on partially ordered set 
(poset for short).

Let $Q$ be a finite poset.
A chain $X$ in $Q$ is a totally ordered subset of $Q$.
The length of a chain $X$ is by definition $\#X-1$,
where $\#X$ is the cardinality of $X$.
The rank of $Q$, denoted $\rank Q$, is the maximum length of chains in $Q$.
If every maximal chain (with respect to the inclusion relation) has
length $\rank Q$, we say that $Q$ is pure.
A subset $I$ of $Q$ such that $x\in Q$, $y\in I$ and $x<y$ imply
$x\in I$ is called a poset ideal of $Q$.
For $x$, $y\in Q$ with $x\leq y$ we define
$[x,y]_Q\define \{z\in Q\mid x\leq z\leq y\}$.
We denote $[x,y]_Q$ as $[x,y]$ if there is no danger of confusion.
$(x,y]_Q$, $[x,y)_Q$ and $(x,y)_Q$ for $x$, $y\in Q$ with $x<y$ are
defined similarly.
If $x$, $y\in Q$, $x<y$ and $(x,y)_Q=\emptyset$,
we say
that $y$ covers $x$ and denote $x\covered y$ or $y\covers x$.
For $x\in Q$, we define $\starcpx_Q(x)\define\{y\in Q\mid y\leq x$
or $y\geq x\}$.
For a poset $Q$, let $\infty$ be a new element which is not contained in
$Q$.
We define the poset $Q^+$ whose base set is $Q\cup\{\infty\}$ and 
$x<y$ if and only if $x$, $y\in Q$ and $x<y$ in $Q$ or
$x\in Q$ and $y=\infty$.

Let $H$ be a finite distributive lattice with unique minimal element
$\{x_0\}$, $P$ the set of \joinirred\ elements of $H$, i.e.,
$P=\{\alpha \in H\mid \alpha=\beta\join\gamma\Rightarrow
\alpha=\beta$ or $\alpha=\gamma\}$.
Note that we treat $x_0$ as a \joinirred\ element.
It is known that $H$ is isomorphic to the set of non-empty poset ideals of $P$
ordered by inclusion.

Let $\{T_x\}_{x\in P}$ be a family of indeterminates indexed by $P$.

\begin{definition}[{\cite{hib}}]
\rm
$\RRRRR_k(H)\define k[\prod_{x\leq \alpha}T_x\mid \alpha\in H]$.
\end{definition}
$\RRRRR_k(H)$ is called the Hibi ring over $k$ on $H$ nowadays.
Hibi \cite[\S 2 b)]{hib} showed that $\RRRRR_k(H)$ is a normal affine 
semigroup ring and thus is \cm\ by the result of Hochster \cite{hoc}.
Further, by setting $\deg T_{x_0}=1$ and $\deg T_x=0$ for $x\in P\setminus\{x_0\}$,
$\RRRRR_k(H)$ is a standard graded $k$-algebra.
From now on, we set $R=\RRRRR_k(H)$.

Here we recall the characterization of \gor\ property of $R$ by Hibi.

\begin{fact}[{\cite[\S 3 d)]{hib}}]
\label{fac:gor cri}
$R$ is \gor\ if and only if $P$ is pure.
\end{fact}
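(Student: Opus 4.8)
The plan is to realize $R=\RRRRR_k(H)$ as a normal affine semigroup ring and extract Gorensteinness from its canonical module. Writing a monomial of $R$ as $\prod_{x\in P}T_x^{a_x}$ and using that $H$ is the lattice of nonempty poset ideals of $P$, the algebra generators $\prod_{x\le\alpha}T_x$ are precisely the indicator vectors $\mathbf 1_I$ of nonempty poset ideals $I$. The decomposition $a=\sum_{j\ge1}\mathbf 1_{\{x\,:\,a_x\ge j\}}$ of an order-reversing integer vector into a nested chain of ideals shows that the exponent semigroup is
\[
S=\{a\in\NNN^P\mid a_x\ge a_y\text{ whenever }x\le y\text{ in }P\}=C\cap\ZZZ^P ,
\]
where $C\subseteq\mathbb R^P$ is the pointed full-dimensional cone cut out by $a_x\ge a_y$ on the covering relations $x\covered y$ and $a_z\ge0$ on the maximal $z$; this is Hibi's normality. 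By the Danilov--Stanley description the graded canonical module is $K_R\cong\bigoplus_{a\in\relint(C)\cap\ZZZ^P}k\,t^a$, and $R$ is \gor\ exactly when this ideal is principal, i.e.\ when $\relint(C)\cap\ZZZ^P=s_0+S$ for the componentwise-smallest interior lattice point $s_0$.

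To pin down $s_0$ I would pass to $P^+=P\cup\{\infty\}$ and set $c(x)$ to be the length of the longest chain from $x$ to $\infty$ (so $c(\infty)=0$ and $c(x)\ge c(y)+1$ on every cover). Since $\relint(C)$ is defined by the strict inequalities, a walk down a longest chain shows every interior lattice point dominates $(c(x))_{x\in P}$, so $s_0=(c(x))_{x\in P}$. For an interior $a$, membership $a\in s_0+S$ is then equivalent to $a_x-a_y\ge c(x)-c(y)$ on every cover $x\covered y$ of $P$, which makes the whole equivalence combinatorial. If $P$ is pure then $P^+$ is graded, so $c$ is a genuine rank function and $c(x)-c(y)=1$ on every cover; as every interior point already satisfies $a_x-a_y\ge1$, we get $\relint(C)\cap\ZZZ^P=s_0+S$ and $R$ is \gor.

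The content is the converse. If $P$ is not pure then $P^+$ is not graded, so some cover $x^*\covered y^*$ is a \emph{jump}, $c(x^*)-c(y^*)\ge2$. I would exhibit an interior point off $s_0+S$ as $a:=s_0+\mathbf 1_F$, where $F$ is the smallest subset with $y^*\in F$ that is closed under adjoining tight lower covers (an $x$ with $x\covered y$, $y\in F$, $c(x)=c(y)+1$). Two checks finish the argument. First $a\in\relint(C)$: the only covers on which adding $\mathbf 1_F$ could spoil strictness are those entering $F$ from below, and by construction each of these is non-tight, so $a_x-a_y\ge2-1>0$. Second $\mathbf 1_F\notin S$, i.e.\ $F$ is not an ideal: each element adjoined to $F$ is a lower cover, so $F\subseteq\{z:z\le y^*\}$; were $x^*\in F$ it would have been adjoined via a tight cover $x^*\covered y'$ with $y'\in F$, forcing $x^*<y'\le y^*$ and contradicting that $x^*\covered y^*$ is a cover. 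Hence $x^*\notin F$ while $y^*\in F$, so $F$ is not downward closed. Then $a-s_0=\mathbf 1_F\notin S$, so $\relint(C)\cap\ZZZ^P\neq s_0+S$, $K_R$ is not principal, and $R$ is not \gor.

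The main obstacle is exactly this converse witness: I must produce one interior point escaping $s_0+S$ without losing strict monotonicity, and the difficulty is that the naive candidates—bumping a single element, a principal filter, or a corank level set—each create a fresh tight cover elsewhere. The tight-closure choice of $F$ is engineered to resolve this tension at once: absorbing every tight incoming cover keeps $a$ in the interior, while the cover/betweenness argument shows the originating jump $x^*\covered y^*$ stays on the boundary of $F$, which is what certifies $\mathbf 1_F\notin S$.
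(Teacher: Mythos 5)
Your proof is correct. The paper does not actually prove this statement --- it is quoted from Hibi \cite[\S 3 d)]{hib} as a known fact --- but your argument is essentially the standard one behind that citation and matches the framework the paper recalls in Section 2: you use the same description $K_R=\bigoplus_{\nu\in\TTTTT(P)}kT^{\nu}$, identify the unique candidate generator with $\nu_0(x)=\rank[x,\infty]$, and your tight-closure set $F$ correctly manufactures a second minimal element $\nu_0+\mathbf{1}_F$ of $\TTTTT(P)$ whenever $P$ is not pure.
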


Set
$\overline\TTTTT(P)\define\{\nu\colon P^+\to\NNN\mid \nu(\infty)=0$
and $x< y\imply \nu(x)\geq\nu(y)\}$
and
$\TTTTT(P)\define\{\nu\colon P^+\to\NNN\mid \nu(\infty)=0$
and $x<y\imply \nu(x)>\nu(y)\}$.
Further, we set $T^{\nu}\define\prod_{x\in P}T_x^{\nu(x)}$ for $\nu\in\overline\TTTTT(P)$.
Note that $\deg T^\nu=\nu(x_0)$.
It is easily verified that
$$
R=\bigoplus_{\nu\in\overline\TTTTT(P)}kT^\nu
$$
\cite[\S3 b)]{hib}
and therefore, by the description of the canonical module of a normal affine 
semigroup ring by Stanley \cite[p.\ 82]{sta2}, we see that
$$
K_R=\bigoplus_{\nu\in\TTTTT(P)}kT^\nu
$$
\cite[\S3 b)]{hib}.
We define the order on $\TTTTT(P)$ by
$\nu\leq\nu'\iff \nu'-\nu\in\overline\TTTTT(P)$,
where we set $(\nu'-\nu)(x)\define\nu'(x)-\nu(x)$.
Then for $\nu\in\TTTTT(P)$, $T^\nu$ is a generator of 
$K_R$ if and only if $\nu$ is a minimal element of $\TTTTT(P)$.

Since $\nu(x)\geq\rank[x,\infty]$ for any $x\in P$ and $\nu\in\TTTTT(P)$,
and $\nu_0\colon P^+\to\NNN$, $x\mapsto \rank[x,\infty]$
is an element of $\TTTTT(P)$, we see that
$$
\min\{i\mid[K_R]_i\neq0\}=\rank P^+.
$$
In particular, $\rank P^+=-a$.
In order to use notation of \cite{miy}, we define auxiliary notation
by $r\define\rank P^+$.
We also see by \cite[\S2 a)]{hib} that $d=\#P$.

We recall the notion of a \scn\  defined in \cite{miy}.

\begin{definition}\rm
\mylabel{def:condn}
Let $y_1$, $x_1$, $y_2$, $x_2$, \ldots, $y_t$, $x_t$ be a 
(possibly empty) sequence of elements in $P$.
We say the sequence 
$y_1$, $x_1$, $y_2$, $x_2$, \ldots, $y_t$, $x_t$ satisfies \condn\
if 
\begin{enumerate}
\item
\mylabel{item:cond n 0}
$x_1\neq x_0$.
\item\mylabel{item:cond n 1}
$y_1>x_1<y_2>x_2<\cdots<y_t>x_t$.
\item\mylabel{item:cond n 2}
For any $i$, $j$ with $1\leq i<j\leq t$,
$y_i\not\geq x_j$.
\end{enumerate}
\end{definition}

\begin{remark}
\rm
A \scn\ may be an empty sequence, i.e.,  $t$ may be $0$.
\end{remark}
For a \scn, we make the following

\begin{definition}\rm
Let
$y_1$, $x_1$, $y_2$, $x_2$, \ldots, $y_t$, $x_t$ 
be a \scn.
We set
$$
r(y_1, x_1, \ldots, y_t, x_t)\define
\sum_{i=1}^t(\rank[x_{i-1},y_i]-\rank[x_i,y_i])+\rank[x_t,\infty],
$$
where we set an empty sum to be 0.
\end{definition}

\begin{remark}\rm
For an empty sequence, we set $r()=\rank[x_0,\infty]=r$.
\end{remark}

It is fairly easy to see that there are only  finitely many sequences with \condn\
\cite[Lemma 3.4]{miy} and we state the following

\begin{definition}\rm
We set
$\rmax\define\max
\{r(y_1,x_1,\ldots,y_t,x_t)\mid y_1$, $x_1$, \ldots, $y_t$, $x_t$ 
is a \scn$\}$.
\end{definition}
By \cite[Corollary 3.5 and Theorem 3.12]{miy}, we see the following 

\begin{fact}
\label{fac:can gen deg}
$
[K_R/\mmmm K_R]_i\neq 0\iff r\leq i\leq \rmax
$.
\end{fact}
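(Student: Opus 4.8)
The plan is to recast everything in terms of the combinatorics of $\TTTTT(P)$. Since $\deg T^\nu=\nu(x_0)$ and, as recalled above, $T^\nu$ is a generator of $K_R$ exactly when $\nu$ is a minimal element of $\TTTTT(P)$, we have $[K_R/\mmmm K_R]_i\neq 0$ if and only if there is a minimal element $\nu$ of $\TTTTT(P)$ with $\nu(x_0)=i$. Hence it suffices to prove that $D\define\{\nu(x_0)\mid \nu\text{ a minimal element of }\TTTTT(P)\}$ equals the integer interval $\{r,r+1,\ldots,\rmax\}$. The left endpoint is immediate: $\nu_0\colon x\mapsto\rank[x,\infty]$ is the pointwise smallest element of $\TTTTT(P)$, hence minimal, with $\nu_0(x_0)=r$, and $\nu(x_0)\geq\nu_0(x_0)=r$ for every $\nu$; on the sequence side this is the empty sequence, for which $r()=r$. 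So $r=\min D$.

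Next I would bound $D$ from above by $\rmax$ by attaching to each minimal $\nu$ a \scn. The bookkeeping rests on the telescoping identity
$$
\nu(x_0)=\sum_{i=1}^{t}\bigl((\nu(x_{i-1})-\nu(y_i))-(\nu(x_i)-\nu(y_i))\bigr)+\nu(x_t),
$$
valid for every sequence with $x_{i-1}<y_i>x_i$, together with the elementary bound $\nu(u)-\nu(v)\geq\rank[u,v]$ for $u<v$, which holds because $\nu$ strictly decreases along any saturated chain. Comparing with the definition of $r(y_1,x_1,\ldots,y_t,x_t)$, equality $r(\sigma)=\nu(x_0)$ is guaranteed once every segment used is \emph{tight}, i.e.\ $\nu(x_{i-1})-\nu(y_i)=\rank[x_{i-1},y_i]$, $\nu(x_i)-\nu(y_i)=\rank[x_i,y_i]$ and $\nu(x_t)=\rank[x_t,\infty]$. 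The core of this step is to construct, from a minimal $\nu$, a sequence all of whose segments are tight: minimality forces, along a suitable zig-zag of covering relations issuing from $x_0$, drops of exactly one at the tight edges, and I would choose the $y_i,x_i$ greedily so as to record these tight stretches. Checking that the sequence so produced satisfies \condn---especially the clause $y_i\not\geq x_j$ for $i<j$, which prevents the same slack from being charged twice---then yields $\nu(x_0)=r(\sigma)\leq\rmax$.

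For the reverse inclusion I would show every integer in $\{r,\ldots,\rmax\}$ lies in $D$. This splits into a realization statement and a no-gaps statement. Realization is the converse of the previous construction: from a \scn\ $\sigma$ one builds a minimal $\nu$ by declaring the recorded segments tight and extending by the pointwise-minimal values elsewhere, so that $\nu(x_0)=r(\sigma)$; applied to a sequence attaining $\rmax$ this gives $\rmax\in D$. For the no-gaps statement I would seek an elementary modification of \sscn---lengthening or shortening the zig-zag, or sliding one $x_i$ along a covering relation---that changes $r(\sigma)$ by exactly $1$ while preserving \condn, and iterate it from a sequence realizing $\rmax$ down to the empty sequence.

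I expect the main obstacle to be exactly this last, interval (no-gaps) part, together with pinning down the two-way, degree-preserving dictionary between minimal elements of $\TTTTT(P)$ and \sscn. The rank inequality $\nu(u)-\nu(v)\geq\rank[u,v]$ hands one inclusion and the tightness bookkeeping cheaply, but matching degrees on the nose---and proving that the non-redundancy clause $y_i\not\geq x_j$ singles out precisely the tight zig-zags arising from genuine minimal generators---calls for a careful monotone modification argument rather than the inequalities alone.
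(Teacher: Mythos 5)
First, a point of genre: the paper does not prove this statement at all --- it is quoted as a Fact from \cite[Corollary 3.5 and Theorem 3.12]{miy}, so there is no in-paper proof to match yours against. Your opening reduction is correct and is exactly the right frame: since $K_R=\bigoplus_{\nu\in\TTTTT(P)}kT^\nu$ with $\deg T^\nu=\nu(x_0)$ and the minimal generators are the $T^\nu$ with $\nu$ minimal in $\TTTTT(P)$, the Fact is equivalent to the set of values $\nu(x_0)$ over minimal $\nu$ being the integer interval $[r,\rmax]$. Your lower endpoint via $\nu_0(x)=\rank[x,\infty]$ is fine, and your upper bound $\nu(x_0)\leq r(\sigma)\leq\rmax$ is essentially the content of \cite[Lemma 3.3]{miy} together with Lemma \ref{lem:deg ryx} of this paper (note that for the inequality you only need tightness of the forward segments $\nu(x_{i-1})-\nu(y_i)=\rank[x_{i-1},y_i]$; the backward segments contribute with the right sign automatically).

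The genuine gaps are the two remaining steps, which you yourself flag but do not close, and which are precisely where the content of \cite[Theorem 3.12]{miy} lives. (i) Your ``realization'' claim --- that from an arbitrary \scn\ $\sigma$ one builds a minimal element $\nu$ with $\nu(x_0)=r(\sigma)$ by ``declaring the recorded segments tight and extending by pointwise-minimal values'' --- is not true as stated: \condn\ alone does not guarantee that such an extension exists, is well-defined, or is minimal in $\TTTTT(P)$; the dictionary between \sscn\ and minimal elements is not a bijection, and even attaining the single value $\rmax$ requires a minimality criterion of the type of \cite[Lemma 3.2]{miy} (tightness from $x_0$ down to some $x$ and from some $y$ up to $\infty$) applied to a carefully chosen, non-retracing sequence. (ii) The no-gaps step is only a declared intention (``I would seek an elementary modification \ldots that changes $r(\sigma)$ by exactly $1$''); no such elementary move on \sscn\ is exhibited, and it is not clear one exists --- a single local change of a zig-zag can change $r(\sigma)$ by more than $1$. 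In \cite{miy} the interval property is obtained instead by interpolating the functions $\nu$ themselves (compare the families $\nu_h$, $1\leq h\leq m-1$, constructed in the proof of Theorem \ref{thm:non level main}, which realize consecutive degrees by shifting values on a down-set by one), not by surgery on sequences. As it stands your argument proves $[K_R/\mmmm K_R]_i\neq0\Rightarrow r\leq i\leq\rmax$ and the case $i=r$, but not the converse implication for $r<i\leq\rmax$.
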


\section{A criterion of a Hibi ring to be level, non-\gor\ and \agor}

\label{sec:level}

In this section and the next, we state criteria of a Hibi ring to be non-\gor\
and \agor.
Recall that we have set $R=\RRRRR_k(H)$, $P$ the set of \joinirred\ elements of $H$,
$r=\rank P^+$, $d=\dim R$ and $a=a(R)$.
Also recall that $(h_0,\ldots, h_s)$, $h_s\neq0$, is the $h$-vector of $R$.

In this section, we treat the level case.

\begin{thm}
\label{thm:level main}
$R$ is level, non-\gor\ and \agor\ if and only if there exists $z\in P$
with the following conditions.
\begin{enumerate}
\item
\label{item:chain r}
$P^+\setminus\{z\}$ is a chain of length $r$.
\item
\label{item:non pure}
$\rank[x_0,z]+\rank[z,\infty]<r$.
\end{enumerate}
\end{thm}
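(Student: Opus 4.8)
The plan is to characterize when $R$ is level, non-\gor\ and \agor\ by translating all three conditions into combinatorial statements about $P$, using the tools assembled in Section~\ref{sec:prel}. Recall that \emph{level} means $K_R$ is generated in a single degree, i.e.\ $[K_R/\mmmm K_R]_i\neq0$ for exactly one value of $i$. By Fact~\ref{fac:can gen deg}, this holds if and only if $\rmax=r$, so the level hypothesis is equivalent to the vanishing of $r(y_1,x_1,\ldots,y_t,x_t)-r$ for every \scn. Meanwhile, non-\gor\ is equivalent to $P$ being non-pure by Fact~\ref{fac:gor cri}. So I would first set up the proof as a chain of equivalences: $R$ level, non-\gor\ and \agor\ $\iff$ the stated combinatorial conditions \ref{item:chain r} and \ref{item:non pure} on the existence of $z\in P$.

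For the \agor\ condition in the level case, I would invoke Corollary~\ref{cor:sym dim}. Since $R$ is level, $K_R(-a)$ is generated in degree $0$ only, so $\dim_k[K_R(-a)/\mmmm K_R(-a)]_i=0$ for $i\neq0$; by Lemma~\ref{lem:dim k c} the cokernel $C$ (if nonzero) is generated in degree $0$ as well, but must satisfy the symmetry $\dim_k[C/\mmmm C]_i=\dim_k[C/\mmmm C]_{s-1-i}$ forced by $\mu(C)=e(C)$. The key numerical input is Fact~\ref{fac:hilb c}: the $h$-vector of $R$ must be such that the resulting $C$ has the required palindromic minimal-generator pattern. In concrete terms I expect the computation to show that, once $R$ is level and non-\gor, being \agor\ forces the $h$-vector to have the shape $(1,c,c,\ldots,c,*)$ or similar, and that this numerical constraint is exactly equivalent to the poset $P$ decomposing (after removing the elements $x$ with $\starcpx_P(x)=P$) into an element together with a chain, with $z$ playing the role of the isolated element. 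The forward direction thus proceeds by computing the $h$-vector from the poset structure \ref{item:chain r}--\ref{item:non pure} and verifying $\mu(C)=e(C)$ directly via Lemma~\ref{lem:mum em}.

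The converse is where I would deploy the combinatorial machinery most heavily. Assuming $R$ is level, non-\gor\ and \agor, the level condition $\rmax=r$ together with Fact~\ref{fac:can gen deg} tightly restricts the \sscn: every such sequence must contribute the same value $r$. I would analyze the possible \sscn\ in $P^+$ and show that the simultaneous constraints of levelness and non-purity force $P^+$ minus a single element $z$ to be totally ordered. Concretely, non-purity produces a maximal chain strictly shorter than $\rank P^+$, hence at least one ``branching'' element; levelness prevents more than one such branch by bounding $\rmax$. The extraction of the single element $z$ satisfying \ref{item:chain r} and the strict inequality \ref{item:non pure} is the technical heart of the argument.

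The main obstacle I anticipate is the converse, specifically controlling the interplay between the two conditions: non-purity \emph{introduces} structural freedom in $P$, while levelness \emph{removes} it, and showing that the only surviving configurations are precisely ``an isolated element plus a chain'' requires a careful case analysis of \sscn. I would handle this by relating $r(y_1,x_1,\ldots,y_t,x_t)$ to rank functions on intervals $[x_{i-1},y_i]$ and $[x_i,y_i]$, showing that any poset element off the main chain that could participate in a second covering relation would yield a \scn\ with $r(\cdots)>r$, contradicting $\rmax=r$. Once this rigidity is established, identifying $z$ and verifying \ref{item:non pure} from non-purity becomes routine.
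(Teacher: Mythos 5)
Your opening paragraphs assemble the right tools, but the decisive step of the ``only if'' direction is missing, and the combinatorial argument you propose in its place cannot work. The paper's argument at this point is purely numerical: since $R$ is level, $[K_R(-a)/\mmmm K_R(-a)]_i=0$ for $i>0$, hence by Lemma~\ref{lem:dim k c} $[C/\mmmm C]_i=0$ for $1\le i\le s-1$; if $s\ge 2$, the symmetry of Corollary~\ref{cor:sym dim} forces $\dim_k[C/\mmmm C]_0=\dim_k[C/\mmmm C]_{s-1}=0$, contradicting $C\neq 0$. Hence $s=1$, i.e.\ $\#P-\rank P^+=d-r=s=1$, so a maximal chain of length $r$ in $P^+$ misses exactly one element $z$; condition \ref{item:chain r} is then immediate and \ref{item:non pure} follows from non-purity via Fact~\ref{fac:gor cri}. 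You list every ingredient of this deduction (levelness forces $C$ to be generated only in degree $0$; $\mu(C)=e(C)$ forces the palindromic pattern) but never draw the conclusion $s=1$; instead you retreat to a vague claim about $h$-vectors of shape $(1,c,\dots,c,*)$ and then propose to extract condition \ref{item:chain r} from ``levelness and non-purity'' alone by analysing \sscn.

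That last claim is false, which is why the \agor\ hypothesis must enter numerically. Take $P=\{x_0,a,b,c,d\}$ with $x_0<a<b$ and with $c,d$ lying above $x_0$ but incomparable to each other and to $a,b$. Then $r=3$ and $P^+$ is not pure; the only nonempty \scn\ is $b,a$, with $r(b,a)=2-1+2=3$, so $\rmax=r$ and $R$ is level and non-\gor. Yet $P^+\setminus\{z\}$ is a chain for no single $z$, and indeed $s=\#P-r=2$, so $R$ is not \agor. Thus levelness plus non-purity does not bound the number of elements off the maximal chain; only $s=1$ does, and your converse never produces it. (Your ``if'' direction is also thinner than it should be --- the paper gets levelness and the \agor\ property essentially for free from $s=1$ via \cite[Proposition 10.1 and Theorem 10.4]{gtt}, with some care about the base field, whereas you defer to an unspecified direct computation of $\mu(C)=e(C)$ --- but the converse is where the genuine gap lies.)
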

\begin{proof}
We first prove the ``only if'' part.
Since $R$ is not \gor, it is not regular.
Therefore $s\geq 1$.
Take an exact sequence
$$
0\to R\to K_R(-a)\to C\to 0
$$
such that $\mu(C)=e(C)$ (since $R$ is not \gor, $C\neq 0$).

Since $R$ is level, we see that 
$[K_R(-a)/\mmmm K_R(-a)]_i=0$ for $i>0$.
Therefore, we see by Lemma \ref{lem:dim k c} that
$[C/\mmmm C]_i=0$ for $1\leq i\leq s-1$.
If $s\geq 2$, then we see by Corollary \ref{cor:sym dim} that
$\dim_k[C/\mmmm C]_0=\dim_k[C/\mmmm C]_{s-1}=0$.
This contradicts to $C\neq 0$.
Therefore, we see that $s=1$.

Take a maximal chain 
$$
x_0<x_1<\cdots<x_{r-1}<x_r=\infty
$$
of length $r$ in $P^+$.
Since $1=s=a+d=d-r=\#P-\rank P^+$,
we see that 
$$
\#(P^+\setminus\{x_0,x_1,\ldots, x_r\})=1.
$$
Set $\{z\}=P^+\setminus\{x_0,x_1,\ldots, x_r\}$.
We show that $z$ satisfies \ref{item:chain r} and \ref{item:non pure}.
\ref{item:chain r} is clear.
Further, $\rank[x_0,z]+\rank[z,\infty]<r$ since $P^+$ is not pure
by Fact \ref{fac:gor cri}.
Thus, $z$ satisfies \ref{item:non pure}.

Next we prove the ``if'' part.
By \ref{item:chain r} and \ref{item:non pure}, we see that $P^+$ is not pure
and
$$
s=a+d=d-r=\#P-\rank P^+=1.
$$
Therefore, $R$ is not \gor, level and \agor\ by Fact \ref{fac:gor cri}
and \cite[Proposition 10.1 and Theorem 10.4]{gtt}.
(Note that in \cite[\S10]{gtt}, they assumed that the base field is an infinite
field.
However, \cite[Lemma 10.1 and Theorem 10.4]{gtt} also hold true in our situation.
First, if $s=1$, then
$R$ has a 2-linear resolution as a module over a
polynomial ring over $k$ with $\dim_k R_1$ variables.
Therefore, $R$ is level.
Thus, \cite[Lemma 10.1]{gtt} also valid in our situation.
As for \cite[Lemma 10.3]{gtt}, which is used in the proof of 
\cite[Theorem 10.4]{gtt} is clearly valid in our situation, since $R$ is a domain
and $K_R$ is an ideal of $R$.
Further, 
we can use \cite[Theorem 10.4]{gtt} in our case by the argument of
base field extension, since $R\otimes_k k'$ is a domain for any extension field
$k'$ of $k$.
Level property of $R$ may also be deduced from \cite[Theorem 3.9]{miy}
and \agor\ property of $R$ is also proved directly by the same argument
as in the proof of Theorem \ref{thm:non level main}.)
\end{proof}

\begin{remark}\rm
If $R$ satisfies the conditions in Theorem \ref{thm:level main},
then $R$ is a polynomial ring over the ring considered in \cite[Example 10.5]{gtt}.
\end{remark}

\section{A criterion of a Hibi ring to be non-level and \agor}

\label{sec:non level}

In this section, we state a criterion of a Hibi ring to be
non-level and \agor.

\begin{thm}
\label{thm:non level main}
$R$ is not level and \agor\ if and only if there are non-negative 
integers $m$, $n$ and $n'$ with $m\geq 3$,
$z_1$, \ldots, $z_m$, $z'_1$, \ldots, $z'_m$, $w_0$, \ldots, $w_n$,
$w'_0$, \ldots, $w'_{n'}\in P^+$ with the following conditions.
\begin{enumerate}
\item
\label{item:whole set}
$P^+=\{z_1$, \ldots, $z_m$, $z'_1$, \ldots, $z'_m$, 
$w_0$, \ldots, $w_n$, $w'_0$, \ldots, $w'_{n'}\}$.
\item
\label{item:least cover rel}
\begin{description}
\item
$x_0=w_0\covered w_1\covered\cdots\covered w_n$,
\item
$w'_0\covered w'_1\covered\cdots\covered w'_{n'}=\infty$,
\item
$w_n\covered z_1\covered z_2\covered \cdots\covered z_m\covered w'_0$,
\item
$w_n\covered z'_1\covered z'_2\covered \cdots\covered z'_m\covered w'_0$ and
\item
$z'_1\covered z_m$.
\end{description}
\item
\label{item:other cover rel}
Other covering relations in $P^+$ are one of the followings.
\begin{enumerate}
\item
\label{item:sym cover}
$z_1\covered z'_m$ or
\item
\label{item:pure cover}
there are integers $i_1$, \ldots, $i_p$ with
$1\leq i_1<\cdots<i_p\leq m-1$ such that
$$
z_{i_j}\covered z'_{i_j+1}\quad\mbox{for $j=1$, \ldots, $p$.}
$$
($p$ may be $0$ in this case, i.e., there is no covering relation other than 
stated in 
\ref{item:least cover rel}.)
\end{enumerate}
\end{enumerate}
\end{thm}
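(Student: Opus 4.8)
The plan is to reduce everything to the \emph{generator-degree sequence} of the canonical module, $g_i\define\dim_k[K_R/\mmmm K_R]_{r+i}$. By Fact \ref{fac:can gen deg} this sequence is positive exactly on the contiguous range $0\le i\le\rmax-r$ and vanishes outside it, and $R$ is level precisely when $r=\rmax$; hence ``non-level'' means $\rmax>r$, i.e.\ $g$ has support beyond the origin. The first step is to record what \agor\ imposes on $g$. Taking an exact sequence $0\to R\to K_R(-a)\to C\to0$ with $C\ne0$ and $\mu(C)=e(C)$, Lemma \ref{lem:dim k c} identifies $g_i$ with $\dim_k[C/\mmmm C]_i+\delta_{0,i}$, while Fact \ref{fac:hilb c} together with Lemma \ref{lem:mum em} gives $\dim_k[C/\mmmm C]_i=(h_s+\cdots+h_{s-i})-(h_0+\cdots+h_i)$ for $0\le i\le s-1$ and $0$ afterwards. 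Combined with Corollary \ref{cor:sym dim} this yields three facts I will exploit repeatedly: $g_i=0$ for $i\ge s$; the near-symmetry $g_i=g_{s-1-i}$ for $1\le i\le s-2$ with $g_{s-1}=g_0-1$; and $g_0=h_s$.

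For the ``only if'' direction I would first combine contiguity with near-symmetry. Since $g$ is positive on $[0,\rmax-r]$, zero beyond, and also zero for $i\ge s$, the identities $g_i=g_{s-1-i}$ force $\rmax-r\in\{s-2,s-1\}$ and pin $g$ down to a rigid profile; tracking the arithmetic $s=\#P-r$ then yields $s=m\ge3$, which in particular places us outside the level regime of Theorem \ref{thm:level main}. The heart of the argument is to translate this rigid numerical profile back into the order structure of $P^+$ via the combinatorial description of the generators of $K_R$ from \cite{miy}: each minimal generator is encoded by a \scn\ $y_1,x_1,\ldots,y_t,x_t$, its degree being the value $r(y_1,x_1,\ldots,y_t,x_t)$. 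I would isolate the sequences realizing the extreme degrees $r$ (the empty sequence, giving $\nu_0$) and $\rmax$, use the symmetry of the middle values $g_1,\ldots,g_{s-2}$ to force the two competing maximal chains from the lower junction $w_n$ to the upper junction $w'_0$ to have equal length $m-1$, and then rule out every element and covering relation not listed in \ref{item:whole set}--\ref{item:other cover rel} (the deletion of $x$ with $\starcpx_P(x)=P$ from the introduction having already removed cone points). The single unavoidable skew cover is $z'_1\covered z_m$, and the only freedom left in completing $P^+$ while keeping $g$ contiguous and symmetric should be the symmetric edge $z_1\covered z'_m$ of \ref{item:sym cover} versus the staircase edges $z_{i_j}\covered z'_{i_j+1}$ of \ref{item:pure cover}.

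For the ``if'' direction the computation runs forward. Given $P^+$ of the stated form I would read off $r=n+n'+m+1$ and $\#P=2m+n+n'+1$, hence $s=m\ge3$; since the chain $w_n\covered z'_1\covered z_m\covered w'_0$ has length $3<m+1$, the poset $P^+$ is not pure and $R$ is non-\gor\ by Fact \ref{fac:gor cri}. I would then compute $\rmax$ and the entire sequence $g$ from the \scn\ calculus of \cite{miy}, checking that $\rmax>r$ (non-level) and that $g$ has exactly the near-symmetric profile above. Finally, to produce the \agor\ structure I would take $\xi_1=T^{\nu_0}$, the lowest-degree generator, let $C$ be the cokernel of the degree-$0$ map $R\to K_R(-a)$, $1\mapsto\xi_1$, check that $C$ is \cm\ of dimension $d-1$, and then use Lemma \ref{lem:mum em} to reduce $\mu(C)=e(C)$ to the termwise identity between the generator counts $g_i-\delta_{0,i}$ and the numerator coefficients of $[[C]]$, which the previous computation makes an equality.

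The step I expect to be the main obstacle is the reconstruction in the ``only if'' part: recovering a precise global poset from purely numerical, nearly symmetric generator data. The difficulty is that many a priori distinct posets could in principle produce the same contiguous, symmetric $g$, so I anticipate needing a careful analysis of which \sscn\ can coexist---establishing that the equal-length constraint on the two middle chains and the single cross relation $z'_1\covered z_m$ are forced, and that no further elements or covers can be inserted without destroying either the contiguity of the support of $g$ or the symmetry of its interior.
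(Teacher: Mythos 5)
Your numerical framework --- the generator-degree sequence $g_i=\dim_k[K_R/\mmmm K_R]_{r+i}$, its contiguity from Fact \ref{fac:can gen deg}, and the near-symmetry extracted from Lemma \ref{lem:dim k c}, Fact \ref{fac:hilb c} and Corollary \ref{cor:sym dim} --- is exactly the one the paper uses, and the deductions $s\geq 2$ and $\rmax-r\in\{s-2,s-1\}$ are correct. But the step you yourself flag as the main obstacle, recovering the poset from this numerical profile, is where the whole proof lives, and it is not done from the numbers alone: the paper's mechanism is combinatorial. It takes a minimal $\nu\in\TTTTT(P)$ with $\nu(x_0)=\rmax$, refines a \scn\ realizing it so that consecutive intervals share no interior points (Lemma \ref{lem:no retrace}), and concatenates saturated chains through the intervals $[x_i,y_{i+1}]$ and $[x_i,y_i]$ to produce $\sum u_i+\sum v_i+1$ \emph{distinct} elements of $P^+$. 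The resulting inequality $\sum u_i+\sum v_i-2\leq\#P-2\leq\rmax\leq\sum u_i-\sum v_i$ forces $t=1$, $v_1=1$ and $\rmax=d-2$, and this is what makes $P^+$ exactly two saturated chains joined at $w_n$ and $w'_0$ with the single cross cover $z'_1\covered z_m$; without this count you have no control on $\#P^+$ and the reconstruction does not start. Likewise, eliminating the forbidden covers $z_i\covered z'_j$ with $j\geq i+2$, $(i,j)\neq(1,m)$ is not a soft consequence of symmetry: the paper exhibits, in their presence, two distinct minimal elements of $\TTTTT(P)$ of degree $r+1$ while proving the degree-$\rmax$ generator is unique, contradicting $g_1=g_{s-2}=1$.

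The ``if'' direction has a parallel gap. Reducing $\mu(C)=e(C)$ to a termwise identity via Lemma \ref{lem:mum em} is legitimate, but verifying that identity needs both sides explicitly, and your ``\scn\ calculus'' yields only the generator degrees, not the numerator of $[[C]]$ (equivalently the $h$-vector partial sums in Fact \ref{fac:hilb c}). The paper instead classifies \emph{all} minimal elements of $\TTTTT(P)$ --- exactly $m-1$ of them, distinguished by $h=\nu(z'_1)-\nu(z_m)$, in case \ref{item:pure cover}, and $2m-3$ in case \ref{item:sym cover} --- and decomposes $K_R(-a)$ by the value of $\nu(z'_1)-\nu(z_m)$, identifying each middle summand as a rank-one free module over the polynomial subring $R'=\bigoplus_{\nu(z'_1)=\nu(z_m)}kT^{\nu}$; this gives $\mu(\cok\varphi)=e(\cok\varphi)=m-2$ (resp.\ $2m-4$) with no $h$-vector computation. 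So your outline points at the right statements but leaves unproved precisely the two computations that constitute the proof.
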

In order to prove this theorem, we first show the following fact.
\begin{lemma}
\label{lem:no retrace}
Let $\nu$ be a minimal element of $\TTTTT(P)$.
Then there exists a sequence $y_1$, $x_1$, \ldots, $y_t$, $x_t$ of elements in $P$
with the following conditions, where we set $y_{t+1}=\infty$.
\begin{enumerate}
\item
\label{item:condn}
$y_1$, $x_1$, \ldots, $y_t$, $x_t$ satisfies \condn.
\item
\label{item:nu diff rank}
$\nu(x_i)-\nu(y_{i+1})=\rank[x_i,y_{i+1}]$ for $0\leq i\leq t$.
\item
\label{item:no retrace}
For any $i$ with $1\leq i\leq t$,
$\rank[x_i,z]+\rank[z,y_{i+1}]
<\rank[x_i,y_{i+1}]$
for any $z\in (x_i,y_{i}]\cap (x_i,y_{i+1}]$
and
$\rank[x_{i-1},z]+\rank[z,y_i]
<\rank[x_{i-1},y_{i}]$
for any $z\in [x_{i-1},y_{i})\cap [x_i,y_i)$.
\end{enumerate}
\end{lemma}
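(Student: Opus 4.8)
The plan is to extract the zigzag from $\nu$ in two stages: a greedy construction producing a sequence satisfying \condn{} together with the geodesic identity \ref{item:nu diff rank}, and then a minimal-complexity refinement that upgrades it to the no-retrace condition \ref{item:no retrace}. Minimality of $\nu$ enters, in the form of a statement about poset ideals, to guarantee that the greedy process keeps climbing and terminates at $\infty$.

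The first step I would carry out is to recast minimality. For $\nu\in\TTTTT(P)$ and $x\leq y$ in $P^+$ one always has $\nu(x)-\nu(y)\geq\rank[x,y]$, since $\nu$ drops by at least one along each cover of a saturated chain of length $\rank[x,y]$; call the ascent $x\to y$ a \emph{$\nu$-geodesic} when equality holds, and note that every single cover is such a geodesic. Because $\nu-\mu\in\TTTTT(P)$ for $\mu\in\overline\TTTTT(P)$ reduces, level by level, to subtracting indicator functions of poset ideals, $\nu$ is minimal in $\TTTTT(P)$ if and only if for every nonempty poset ideal $I$ of $P$ there is a covering $u\covered v$ in $P^+$ with $u\in I$, $v\notin I$ and $\nu(u)-\nu(v)=1$. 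I would prove this equivalence first, as it is the only place minimality is used.

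Next I would build the sequence greedily from $x_0$. Maintaining a current valley $x_i$ (initially $x_0$), I choose $y_{i+1}$ to be a maximal element of $P^+$ reachable from $x_i$ by a $\nu$-geodesic, so that \ref{item:nu diff rank} holds for the index $i$; if $y_{i+1}=\infty$ I stop with $t=i$. Otherwise, applying the ideal criterion to the poset ideal of elements already traversed furnishes a tight cover leaving it, hence a strictly deeper region, from which I select the next valley $x_{i+1}<y_{i+1}$ admitting a $\nu$-geodesic ascent strictly above $y_{i+1}$. The maximality of each peak is what I expect to yield the nonretrogression requirement $y_i\not\geq x_j$ of \condn{} and to force the peaks to increase along the filter of reached elements, which both terminates the construction at $\infty$ and produces a genuine \scn{} obeying \ref{item:nu diff rank}.

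Finally, to secure \ref{item:no retrace}, I would pass to a sequence that, among all those satisfying \ref{item:condn} and \ref{item:nu diff rank}, minimizes the number $t$ of peaks and then $\sum_i\rank[x_i,y_i]$. If \ref{item:no retrace} failed, say at $x_i$ with a witness $z\in(x_i,y_i]\cap(x_i,y_{i+1}]$ for which $\rank[x_i,z]+\rank[z,y_{i+1}]=\rank[x_i,y_{i+1}]$, then \ref{item:nu diff rank} forces $z$ to split the geodesic, so $\nu(x_i)-\nu(z)=\rank[x_i,z]$ and $\nu(z)-\nu(y_{i+1})=\rank[z,y_{i+1}]$; since also $z\leq y_i$, splicing the peaks $y_i$ and $y_{i+1}$ through $z$ yields a sequence with fewer peaks still satisfying \ref{item:condn} and \ref{item:nu diff rank}, contradicting minimality, and the second inequality is handled symmetrically at a peak. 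The hard part will be exactly this exchange bookkeeping: checking that rerouting through a retrace witness preserves the nonretrogression inequalities of \condn{} and the geodesic identities \ref{item:nu diff rank} while strictly dropping the complexity measure, together with the verification that the greedy peaks really do increase so that the construction halts at $\infty$.
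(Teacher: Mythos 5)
There is a genuine gap in your final exchange step, which is the heart of the lemma. Given a retrace witness $z\in(x_i,y_i]\cap(x_i,y_{i+1}]$ with $\rank[x_i,z]+\rank[z,y_{i+1}]=\rank[x_i,y_{i+1}]$, ``splicing the peaks $y_i$ and $y_{i+1}$ through $z$'' does not produce a sequence with fewer peaks. To drop a peak you would have to delete either the pair $x_i,y_{i+1}$, which requires $y_i>x_{i+1}$ --- exactly what \condn\ forbids via $y_i\not\geq x_{i+1}$ --- or the pair $y_i,x_i$, which requires $x_{i-1}<y_{i+1}$ together with the geodesic identity $\nu(x_{i-1})-\nu(y_{i+1})=\rank[x_{i-1},y_{i+1}]$, neither of which you have. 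The move that works, and the one the paper makes, keeps $t$ fixed and replaces the \emph{valley} $x_i$ by $z$ (symmetrically, replaces the peak $y_i$ by $z$ for the second kind of witness). Since $\nu(x_i)-\nu(z)\geq\rank[x_i,z]$ and $\nu(z)-\nu(y_{i+1})\geq\rank[z,y_{i+1}]$ sum to the equality $\nu(x_i)-\nu(y_{i+1})=\rank[x_i,y_{i+1}]$, both are equalities, so \ref{item:nu diff rank} survives; for $j<i$, $y_j\geq z>x_i$ would contradict $y_j\not\geq x_i$, so \condn\ survives; and $x_i$ strictly increases (equivalently, your own secondary measure $\sum_i\rank[x_i,y_i]$ strictly drops at fixed $t$), so the process terminates. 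Your lexicographic framework can absorb this correction, but the contradiction you actually derive --- with minimality of $t$ --- is not available.

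On the first stage: the paper does not reprove the existence of a sequence satisfying \ref{item:condn} and \ref{item:nu diff rank}; this is precisely \cite[Lemma 3.3]{miy} and is simply cited. Your greedy reconstruction is only a sketch, and the two claims that carry it (that the peaks increase so the process reaches $\infty$, and that maximality of each peak yields $y_i\not\geq x_j$ for $i<j$) are flagged as expectations rather than proved. Moreover, your parenthetical assertion that every single cover is a $\nu$-geodesic is false: for $u\covered v$ one only has $\nu(u)-\nu(v)\geq 1$, and a minimal $\nu$ can perfectly well have covers with drop at least $2$; minimality only guarantees, for each nonempty poset ideal, \emph{some} tight cover leaving it. None of this is fatal if you quote the reference for the first stage, but as written that part of the construction is not a proof.
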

\begin{proof}
By \cite[Lemma 3.3]{miy}, we see that there is a sequence
$y_1$, $x_1$, \ldots, $y_t$, $x_t$ of elements in $P$ with \condn\
such that
$\nu(x_i)-\nu(y_{i+1})=\rank[x_i,y_{i+1}]$ for $0\leq i\leq t$.

If there exist $i$ with $1\leq i\leq t$ and 
$z\in (x_i,y_{i}]\cap (x_i,y_{i+1}]$
such that 
$$
\rank[x_i,z]+\rank[z,y_{i+1}]=\rank[x_i,y_{i+1}],
$$
then we can replace $x_i$ with $z$.
Similarly, if there exist $i$ with $1\leq i\leq t$ and 
$z\in [x_{i-1},y_{i})\cap [x_i,y_i)$ such that
$$
\rank[x_{i-1},z]+\rank[z,y_i]=\rank[x_{i-1},y_i],
$$
then we can replace $y_i$ with $z$.

It is clear that after finite steps of replacements, we get
a sequence which satisfies
\ref{item:condn}, \ref{item:nu diff rank} and \ref{item:no retrace}.
\end{proof}
We also note the following fact, whose proof is straightforward.

\begin{lemma}
\label{lem:deg ryx}
Let $\nu$ be an element of $\TTTTT(P)$ and 
$y_1$, $x_1$, \ldots, $y_t$, $x_t$ a sequence of elements in $P$
which satisfies \ref{item:condn} and \ref{item:nu diff rank} of Lemma \ref{lem:no retrace}.
Then
$$
\nu(x_0)\leq r(y_1,x_1,\ldots,y_t,x_t).
$$
\end{lemma}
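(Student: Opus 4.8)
The plan is to compute $\nu(x_0)$ by telescoping the values of $\nu$ along the zigzag path
$$
x_0,\ y_1,\ x_1,\ y_2,\ x_2,\ \ldots,\ y_t,\ x_t,\ \infty
$$
in $P^+$ (recall $y_{t+1}=\infty$ and $\nu(\infty)=0$, and that $x_0\leq y_1$ since $x_0$ is the least element), separating the ascending legs $x_i<y_{i+1}$ from the descending legs $x_i<y_i$. Since $\nu(\infty)=0$, I would write
$$
\nu(x_0)=\nu(x_0)-\nu(\infty)=\sum_{i=0}^{t}\bigl(\nu(x_i)-\nu(y_{i+1})\bigr)+\sum_{i=1}^{t}\bigl(\nu(y_i)-\nu(x_i)\bigr),
$$
the first sum collecting the ascending steps and the second the descending steps. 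A direct check that the cross terms cancel confirms this identity.

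The one elementary ingredient I need is that for $\nu\in\TTTTT(P)$ and $a\leq b$ in $P^+$ one has $\nu(a)-\nu(b)\geq\rank[a,b]$: along a saturated chain from $a$ to $b$ of maximal length, the value of $\nu$ drops by at least $1$ at every covering step, because $\nu$ is strictly order-reversing and $\NNN$-valued. Applying this to each descending leg $x_i<y_i$ gives $\nu(y_i)-\nu(x_i)\leq-\rank[x_i,y_i]$. For the ascending legs I would simply invoke hypothesis \ref{item:nu diff rank} of Lemma \ref{lem:no retrace}, namely $\nu(x_i)-\nu(y_{i+1})=\rank[x_i,y_{i+1}]$ for $0\leq i\leq t$.

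Substituting both estimates into the telescoped expression yields
$$
\nu(x_0)\leq\sum_{i=0}^{t}\rank[x_i,y_{i+1}]-\sum_{i=1}^{t}\rank[x_i,y_i].
$$
Finally I would split off the last ascending term $\rank[x_t,\infty]$ and reindex $i\mapsto i-1$ in the remaining ascending sum, turning $\sum_{i=0}^{t-1}\rank[x_i,y_{i+1}]$ into $\sum_{i=1}^{t}\rank[x_{i-1},y_i]$. The right-hand side then becomes $\sum_{i=1}^t\bigl(\rank[x_{i-1},y_i]-\rank[x_i,y_i]\bigr)+\rank[x_t,\infty]$, which is exactly $r(y_1,x_1,\ldots,y_t,x_t)$ by definition, giving the claim.

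As the paper already indicates, this argument is straightforward and I do not expect any genuine obstacle. The only points requiring care are the index bookkeeping in the telescoping sum and the reindexing at the end, together with ensuring that hypothesis \ref{item:nu diff rank} is applied precisely on the ascending legs while the order-reversing inequality $\nu(a)-\nu(b)\geq\rank[a,b]$ is applied precisely on the descending legs.
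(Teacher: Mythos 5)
Your proof is correct, and since the paper omits the argument entirely (calling it straightforward), your telescoping decomposition along the zigzag, using the hypothesis $\nu(x_i)-\nu(y_{i+1})=\rank[x_i,y_{i+1}]$ on the ascending legs and the strict order-reversal bound $\nu(x_i)-\nu(y_i)\geq\rank[x_i,y_i]$ on the descending legs, is precisely the intended argument. The index bookkeeping and the final reindexing check out, including the degenerate case $t=0$.
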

Now we state

%
%

\noindent
{\bf Proof of Theorem \ref{thm:non level main}}\
We first prove the ``only if'' part.
Take an exact sequence
$$
0\to R\to K_R(-a)\to C\to 0
$$
such that $\mu(C)=e(C)$ (since $R$ is not \gor, $C\neq 0$).
Since $R$ is not level, we see by \cite[Theorem 3.9]{miy} that
$\rmax>r$.
Therefore, by Fact \ref{fac:can gen deg} and Lemma \ref{lem:dim k c},
we see that
$$
\dim_k[C/\mmmm C]_1=\dim_k[K_R(-a)/\mmmm K_R(-a)]_1>0.
$$
Thus, we see by Fact \ref{fac:hilb c} and Lemma \ref{lem:mum em} that $s\geq 2$.
Further, since
$$
\dim_k[K_R(-a)/\mmmm K_R(-a)]_{s-2}\geq \dim_k[C/\mmmm C]_{s-2}=\dim_k[C/\mmmm C]_1>0
$$
by Corollary \ref{cor:sym dim}, we see by Fact \ref{fac:can gen deg} that
$$
\rmax\geq s-2-a=a+d-2-a=d-2.
$$

Let $\nu$ be a minimal element of $\TTTTT(P)$ with $\nu(x_0)=\rmax$.
Take a sequence $y_1$, $x_1$, \ldots, $y_t$, $x_t$ of elements in $P$ which
satisfies
\ref{item:condn}, \ref{item:nu diff rank} and \ref{item:no retrace}
of Lemma \ref{lem:no retrace}.
Further take
$\alpha_{ij}$ and $\beta_{ij}$
such that
$x_i=\alpha_{i0}\covered\alpha_{i1}\covered\cdots\covered\alpha_{iu_i}=y_{i+1}$
for $0\leq i\leq t$, where we set $y_{t+1}=\infty$ and $u_i=\rank[x_i,y_{i+1}]$ 
and
$x_i=\beta_{i0}\covered\beta_{i1}\covered\cdots\covered\beta_{iv_i}=y_{i}$
for $1\leq i\leq t$, where we set $v_i=\rank[x_i,y_i]$.
Then
$\alpha_{00}$, \ldots, $\alpha_{0,u_0}$, $\beta_{11}$, \ldots, $\beta_{1,v_1-1}$,
$\alpha_{10}$, \ldots, $\alpha_{1,u_1}$, $\beta_{21}$, \ldots, $\beta_{2,v_1-1}$,
\ldots,
$\alpha_{t-1,0}$, \ldots, $\alpha_{t-1,u_{t-1}}$, $\beta_{t,1}$, \ldots, $\beta_{t,v_{t}-1}$,
$\alpha_{t,0}$, \ldots, $\alpha_{t,u_t}$
are distinct elements of $P^+$,
since the sequence $y_1$, $x_1$, \ldots, $y_t$, $x_t$ 
satisfies \ref{item:condn}, \ref{item:nu diff rank} and \ref{item:no retrace} of
Lemma \ref{lem:no retrace}.
Since
\begin{eqnarray*}
u_0+v_1+\cdots+v_t+u_t-2&\leq&
\#P-2=d-2\\
&\leq&\rmax=\nu(x_0)\\
&\leq& r(y_1,x_1,\ldots,y_t,x_t)=u_0-v_1+\cdots-v_t+u_t,
\end{eqnarray*}
by Lemma \ref{lem:deg ryx},
we see that $t=1$, $v_1=1$ and
$\rmax=u_0-1+u_1
=\#P-2=d-2$.
In particular,
$P^+=\{\alpha_{00}$, $\alpha_{01}$, \ldots, $\alpha_{0u_0}$, $\alpha_{10}$,  $\alpha_{11}$, 
\ldots, $\alpha_{1u_1}\}$
and
$x_0=\alpha_{00}\covered\alpha_{01}\covered\cdots\covered\alpha_{0u_0}$,
$\alpha_{10}\covered\alpha_{11}\covered\cdots\covered\alpha_{1u_1}=\infty$ and
$\alpha_{10}\covered\alpha_{0u_0}$.

Set
$$
n\define\max\{j\mid \alpha_{0j}<\alpha_{10}\},\ 
m'\define\min\{j\mid\alpha_{1j}>\alpha_{0u_0}\},\ 
$$
$$
n'\define u_1-m'\mbox{ and }m\define u_0-n.
$$
Set also
\begin{eqnarray*}
&&w_0=\alpha_{00}, w_1=\alpha_{01}, \ldots, w_n=\alpha_{0n},\\
&&w'_0=\alpha_{1m'}, w'_1=\alpha_{1,m'+1}, \ldots, w'_{n'}=\alpha_{1u_1},\\
&&z_1=\alpha_{0,n+1}, z_2=\alpha_{0,n+2}, \ldots, z_m=\alpha_{0,u_0},\\
&&z'_1=\alpha_{10}, z'_2=\alpha_{11}, \ldots, z'_{m'}=\alpha_{1,m'-1}.
\end{eqnarray*}
Then \ref{item:whole set} and \ref{item:least cover rel} are satisfied.
Further, 
since $w_n\covered z'_1\covered z_m\covered w'_0$,
$z'_{m'}\covered w'_0$ and $w_n\covered z_1$,
we see that $z_1\neq z_m$ and $z'_1\neq z'_{m'}$, i.e., 
$m$, $m'\geq 2$, by the definition of covering relation.

Next we show that 
$$
\rank[x_0,x]+\rank[x,\infty]=r
$$
for any $x\in P^+$.
Assume the contrary.
Then by \cite[Lemma 4.1]{miy}, we see that
$\dim_k[K_R(-a)/\mmmm K_R(-a)]_0\geq 2$.
Therefore, we see by Corollary \ref{cor:sym dim} that
$\dim[K_R(-a)/\mmmm K_R(-a)]_{s-1}>0$.
It follows that 
$$
\rmax\geq s-1-a=a+d-1-a=d-1
$$
by Fact \ref{fac:can gen deg},
contradicts to the fact that $\rmax=d-2$.

Since
$\rank[x_0,z_m]=u_0=n+m$,
$\rank[z_m,\infty]=n'+1$,
$\rank[x_0,z'_1]=n+1$
and $\rank[z'_1,\infty]=u_1=m'+n'$,
we see that $n+m+n'+1=n+1+n'+m'=r$.
In particular, $m=m'$.
Further, we see that
$d=u_0+u_1+1=n+2m+n'+1$ and $s=d-r=m$.
Moreover, since 
$n+m+n'+1=r<\rmax=d-2=n+2m+n'-1$, 
we see that $s=m\geq 3$.

\medskip

Consider the possible covering relations in $P^+$ other than listed in
\ref{item:least cover rel}.
By the definition of covering relation,
$z'_i\not\covered z_j$ if $(i,j)\neq (1,m)$.
Therefore, we consider what kind of covering relations 
of the form $z_i\covered z'_j$ possible to exist.

If $i\geq j$ and $z_i\covered z'_j$, then
\begin{eqnarray*}
\rank[x_0,z_i]+\rank[z_i,z'_j]+\rank[z'_j,\infty]
&=&(n+i)+1+(m-j+n'+1)\\
&>&n+m+n'+1=r
\end{eqnarray*}
contradicting to the fact that $r=\rank P^+$.

Next assume that there are $i$ and $j$ such that
$j\geq i+2$, $(i,j)\neq(1,m)$ and $z_i\covered z'_j$.
Then, by the definition of covering relation, we see that
$z_1\not\covered z'_m$.
Let $\nu$ be a minimal element of $\TTTTT(P)$ such that $\nu(x_0)=\rmax=d-2$.
Take a sequence $y_1$, $x_1$, \ldots, $y_t$, $x_t$ of elements in $P$ 
which satisfies \ref{item:whole set}, \ref{item:nu diff rank} and \ref{item:no retrace}
of Lemma \ref{lem:no retrace}.
Then by the same argument as above, we see that $t=1$, $x_1\covered y_1$
and $\rank[x_0,y_1]+\rank[x_1,\infty]=d-1$.
Therefore, we see $y_1=z_m$ and $x_1=z'_1$ since $z_1\not\covered z'_m$.
Since
$\nu(x_0)-\nu(z_m)=\nu(x_0)-\nu(y_1)=\rank[x_0,y_1]=\rank[x_0,z_m]$
and
$\nu(z'_1)=\nu(x_1)-\nu(\infty)=\rank[x_1,\infty]=\rank[z'_1,\infty]$,
we see that
$$
\nu(x)=
\left\{
\begin{array}{ll}
\rank[x,\infty]&\quad\mbox{if $x\in [z'_1,\infty]\setminus\{z_m\}$,}\\
d-2-\rank[x_0,x]&\quad\mbox{otherwise.}
\end{array}
\right.
$$
In particular, there is exactly one minimal element $\nu$ of 
$\TTTTT(P)$ with $\nu(x_0)=d-2$.
Therefore,
$$
\dim_k[K_R(-a)/\mmmm K_R(-a)]_{s-2}=
\dim_k[K_R/\mmmm K_R]_{d-2}=1.
$$
On the other hand, if we define
$\nu_1$, $\nu_2\colon P^+\to\NNN$ by
\begin{eqnarray*}
\nu_1(x)&=&
\left\{
\begin{array}{ll}
\rank[x,\infty]&\quad\mbox{if $x\in [z'_1,\infty]\setminus\{z_m\}$,}\\
\rank[x,\infty]+1&\quad\mbox{otherwise,}
\end{array}
\right.\\
\nu_2(x)&=&
\left\{
\begin{array}{ll}
\rank[x,\infty]&\quad\mbox{if $x\in [z_i,\infty]\setminus\{z'_j\}$,}\\
\rank[x,\infty]+1&\quad\mbox{otherwise,}
\end{array}
\right.\\
\end{eqnarray*}
then $\nu_1$ and $\nu_2$ are minimal elements of $\TTTTT(P)$ by 
\cite[Lemma 3.2]{miy} and $\nu_1\neq\nu_2$ since $\nu_1(z_m)\neq\nu_2(z_m)$.
Further $\nu_1(x_0)=\nu_2(x_0)=r+1$.
Thus, we see that 
$$
\dim_k[K_R(-a)/\mmmm K_R(-a)]_1\geq 2.
$$
This contradicts to Corollary \ref{cor:sym dim}.
Therefore, we see that there is no covering relation $z_i\covered z'_j$ with
$j\geq i+2$ and $(i,j)\neq (1,m)$.

Summing up, possible covering relations of the form $z_i\covered z'_j$
are
\begin{itemize}
\item
$z_1\covered z'_m$
and
\item
$z_i\covered z'_{i+1}$ with $1\leq i\leq m-1$.
\end{itemize}
If $z_1\covered z'_m$, then there is no other covering relations of the form
$z_i\covered z'_j$ by the definition of covering relation.
Therefore, we see \ref{item:other cover rel} of Theorem \ref{thm:non level main}
is satisfied by
$z_1$, \ldots, $z_m$,
$z'_1$, \ldots, $z'_m$,
$w_0$, \ldots, $w_n$,
$w'_0$, \ldots, $w'_{n'}$.
This proves the ``only if'' part of Theorem \ref{thm:non level main}.

\medskip

Next we prove the ``if'' part of Theorem \ref{thm:non level main}.
We first assume that there are 
$z_1$, \ldots, $z_m$,
$z'_1$, \ldots, $z'_m$,
$w_0$, \ldots, $w_n$,
$w'_0$, \ldots, $w'_{n'}\in P^+$
which satisfy \ref{item:whole set},
\ref{item:least cover rel} and \ref{item:pure cover}.
First note that for any $\alpha_1$, $\alpha_2\in P^+$ with $\alpha_1\leq\alpha_2$,
$$
\rank[x_0,\alpha_1]+\rank[\alpha_1,\alpha_2]+\rank[\alpha_2,\infty]<r
$$
if and only if $(\alpha_1,\alpha_2)=(z'_1,z_m)$.
In particular, $\rank[x_0,x]+\rank[x,\infty]=r$ for any $x\in P^+$

For $1\leq h\leq m-1$, set
$\nu_h\colon P^+\to\NNN$ by
$$
\nu_h(x)=
\left\{
\begin{array}{ll}
\rank[x,\infty]&\quad\mbox{if $x\in [z'_1,\infty]\setminus\{z_m\}$,}\\
\rank[x,\infty]+m-1-h&\quad\mbox{otherwise.}
\end{array}
\right.\\
$$
Then by \cite[Lemma 3.2]{miy}, we see that $\nu_h$ is a minimal element of $\TTTTT(P)$,
 since $\nu_h(x_0)-\nu_h(z_m)=\rank[x_0,z_m]$ and $\nu_h(z'_1)=\rank[z'_1,\infty]$.
Further,
$\nu_h(z'_1)-\nu_h(z_m)=h$ for $1\leq h\leq m-1$.

Let $\nu$ be an arbitrary minimal element of $\TTTTT(P)$.
Then by \cite[Lemma 3.3]{miy}, we see that there is a sequence
$y_1$, $x_1$, \ldots, $y_t$, $x_t$ with \condn\ such that
$$
\nu(x_{i-1})-\nu(y_i)=\rank[x_{i-1},y_i]
$$
for $1\leq i\leq t+1$, where we set $y_{t+1}=\infty$.
Take
$y_1$, $x_1$, \ldots, $y_t$, $x_t$ as short as possible.
If $t=0$, then $\nu(x_0)=r$. 
Thus, $\nu(x)=\rank[x,\infty]$ for any $x\in P$,
since $\rank[x_0,x]+\rank[x,\infty]=r$.
Therefore, $\nu=\nu_{m-1}$.
Suppose $t>0$.
Then $\nu(x_0)-\nu(y_2)>\rank[x_0,y_2]$
since we took 
$y_1$, $x_1$, \ldots, $y_t$, $x_t$ as short as possible.
Thus,
\begin{eqnarray*}
&&\rank[x_0,y_1]-\rank[x_1,y_1]+\rank[x_1,y_2]\\
&=&\nu(x_0)-\nu(y_1)-\rank[x_1,y_1]+\nu(x_1)-\nu(y_2)\\
&\geq&\nu(x_0)-\nu(y_1)-(\nu(x_1)-\nu(y_1))+\nu(x_1)-\nu(y_2)\\
&=&\nu(x_0)-\nu(y_2)\\
&>&\rank[x_0,y_2]
\end{eqnarray*}
and therefore,
\begin{eqnarray*}
&&\rank[x_0,x_1]+\rank[x_1,y_1]+\rank[y_1,\infty]\\
&=&2r-\rank[x_1,\infty]+\rank[x_1,y_1]-\rank[x_0,y_1]\\
&\leq&2r-\rank[x_0,y_1]+\rank[x_1,y_1]-\rank[x_1,y_2]-\rank[y_2,\infty]\\
&<&2r-\rank[x_0,y_2]-\rank[y_2,\infty]\\
&=&r.
\end{eqnarray*}
Thus,  $x_1=z'_1$ and $y_1=z_m$.
We see that $x_t=z'_1$ and $y_t=z_m$ by the same way.
Therefore $t=1$ since $y_1\not\geq x_i$ for any $i\geq 2$
by the definition of \condn.
Further, we see that $\nu=\nu_h$, where $h=\nu(z'_1)-\nu(z_m)$.
Thus, we see that there are exactly $m-1$ minimal elements
$\nu_1$, \ldots, $\nu_{m-1}$ of $\TTTTT(P)$.

Let $\varphi\colon R\to K_R(-a)$ be the $R$-homomorphism such  that 
$\varphi(1)=T^{\nu_{m-1}}$.
Since
\begin{eqnarray*}
K_R(-a)&=&\bigoplus_{\nu\in\TTTTT(P)}kT^\nu\\
&=&\left(\bigoplus_{h=1}^{m-2}\left(\bigoplus_{\nu\in\TTTTT(P)\atop\nu(z'_1)-\nu(z_m)=h}kT^\nu
\right)\right)
\oplus
\left(\bigoplus_{\nu\in\TTTTT(P)\atop\nu(z'_1)-\nu(z_m)\geq m-1}kT^\nu
\right)
\end{eqnarray*}
as a $k$-vector space and
$$
\image\varphi=
\bigoplus_{\nu\in\TTTTT(P)\atop\nu(z'_1)-\nu(z_m)\geq m-1}kT^\nu,
$$
we see that 
$$
\dim_k[\cok\varphi]_i=
\dim_k\left[
\bigoplus_{h=1}^{m-2}\left(\bigoplus_{\nu\in\TTTTT(P)\atop\nu(z'_1)-\nu(z_m)=h}kT^\nu
\right)\right]_i
$$
for any $i\in\ZZZ$.
Further, we see that $\mu(\cok\varphi)=m-2$, 
since $K_R$ is generated by $T^{\nu_1}$, \ldots, $T^{\nu_{m-1}}$.
Set 
$$
R'=\bigoplus_{\nu\in\overline\TTTTT(P)\atop\nu(z'_1)=\nu(z_m)}kT^\nu.
$$
Then $R'$ is a $k$-subalgebra of $R$.
Further, $R'$ is isomorphic to a polynomial ring over $k$
with $d-1$ variables, since $R'$ is isomorphic to the Hibi ring over $k$
on a chain of length $d-2$.
Moreover, we see that
$
\bigoplus_{\nu\in\TTTTT(P)\atop\nu(z'_1)-\nu(z_m)=h}kT^\nu
$
is a rank 1 free module over $R'$ generated by $T^{\nu_h}$ for $1\leq h\leq m-2$.
Since multiplicity is defined by the Hilbert series, we see that
$
e(\cok\varphi)=m-2
$.
Therefore, we see that $R$ is \agor.

Next, we assume that there are
$z_1$, \ldots, $z_m$,
$z'_1$, \ldots, $z'_m$,
$w_0$, \ldots, $w_n$,
$w'_0$, \ldots, $w'_{n'}\in P^+$
which satisfy \ref{item:whole set}, \ref{item:least cover rel} and \ref{item:sym cover}.
For $1\leq h\leq m-1$, set
$\nu_h\colon P^+\to\NNN$ by
$$
\nu_h(x)=
\left\{
\begin{array}{ll}
\rank[x,\infty]&\quad\mbox{if $x\in [z'_1,\infty]\setminus\{z_m\}$,}\\
\rank[x,\infty]+m-1-h&\quad\mbox{otherwise}
\end{array}
\right.\\
$$
and for $1\leq h\leq m-2$, set
$\nu'_h\colon P^+\to\NNN$ by
$$
\nu'_h(x)=
\left\{
\begin{array}{ll}
\rank[x,\infty]&\quad\mbox{if $x\in [z_1,\infty]\setminus\{z'_m\}$,}\\
\rank[x,\infty]+m-1-h&\quad\mbox{otherwise.}
\end{array}
\right.\\
$$
Then we see by the same argument as above, that there are exactly $2m-3$
minimal elements
$\nu_1$, \ldots, $\nu_{m-1}$, $\nu'_1$, \ldots, $\nu'_{m-2}$ 
of $\TTTTT(P)$ and
$$
e(\cok\varphi)=\mu(\cok\varphi)=2m-4,
$$
where $\varphi$ is the $R$-homomorphism $R\to K_R(-a)$
such that $\varphi(1)=T^{\nu_{m-1}}$.
\qed

\begin{remark}\rm
$Q_m$ in \cite[Example 5.7]{hig} is the poset which satisfies 
\ref{item:condn}, \ref{item:least cover rel} and \ref{item:sym cover}
with $n=n'=0$
and $P_m$ in \cite[Theorem 5.3]{hig} is the poset which satisfies
\ref{item:condn}, \ref{item:least cover rel} and \ref{item:pure cover} with $n=n'=p=0$.
\end{remark}

\section{\Agor\ property of ladder determinantal rings defined by 2-minors}
\label{sec:ladder}

In this section, we state a criterion of a ladder determinantal ring defined
by 2-minors to be a non-\gor\ \agor\ ring.
Note that a (ladder) determinantal ring defined by 2-minors is a Hibi ring.

We use the notation of \cite[\S 1]{con}.
Let $m$ and $n$ be integers with $2\leq m\leq n$ and $X$ an $m\times n$ matrix
of indeterminates.
Let $P_0$ be a poset which is a disjoint union of two chains
$\alpha_1\covered\cdots\covered\alpha_{m-1}$ and
$\beta_1\covered\cdots\covered\beta_{n-1}$.
Then by the correspondence
$$
(i,j)\leftrightarrow\{\alpha_1,\ldots,\alpha_{m-i}\}\cup\{\beta_1,\ldots,\beta_{j-1}\},
$$
we see the set of poset ideals (including the empty set) $J(P_0)$ of
$P_0$  ordered by inclusion is isomorphic to $X$ ordered by $\preceq$
of \cite[p.\ 121]{con}.
Let $P_1$ be a poset whose base set is $P_0$ and adding a covering relation 
$\alpha_s\covered\beta_t$.
Then a poset ideal of $P_1$ is a poset ideal of $P_0$, i.e.,
$J(P_1)\subset J(P_0)$.
Further, it is easily verified that by the above correspondence, $J(P_1)$
corresponds to
$$
\{(i,j)\mid i\leq m-s \mbox{ or } j\leq t \}
=X\setminus\{(i,j)\mid i>m-s, j>t\},
$$
i.e., $J(P_1)$ corresponds to a ladder with inside upper corner $(m-s,t)$
in the terminology of \cite[p.\ 122]{con}.

By repeating this argument, one sees that any ladder satisfying (a) and (b)
of \cite[p.\ 121]{con} can be obtained by adding several covering
relations of form
$\alpha_s\covered\beta_t$ or $\alpha_{s'}\covers\beta_{t'}$ to $P_0$.
Therefore, by the results of previous sections, we see the following fact.

\begin{thm}
\label{thm:ladder}
Let $m$, $n$ be integers with $2\leq m\leq n$,
$X$ an $m\times n$ matrix of indeterminates,
$Y$ a ladder of $X$ which satisfies (a) and (b) of \cite[p.\ 121]{con}.
Assume that all the indeterminates of $Y$ are involved in some 2-minors of $Y$.
Then $R_2(Y)$ is non-\gor\ and \agor\ if and only if one of the following
conditions is satisfied.
\begin{enumerate}
\item
$m=2$, $n\geq 3$ and $Y=X$.
\item
$m=n\geq 4$,
inside lower corner of $Y$ is $(2,2)$ and inside upper corner is 
just $(m-1,m-1)$ or lie on the line of equation $x+y=m+1$.
(We allow one sided ladder, i.e., there may be no inside upper corner.) 
\item
$m=n\geq 4$,
inside upper corner of $Y$ is $(m-1,m-1)$ and inside lower corner is 
just $(2,2)$ or lie on the line of equation $x+y=m+1$.
(We allow one sided ladder, i.e., there may be no inside lower corner.) 
\end{enumerate}
\end{thm}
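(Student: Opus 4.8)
The plan is to use the dictionary set up just before the statement, between ladders and posets, and then to read conditions (1)--(3) off directly from Theorems \ref{thm:level main} and \ref{thm:non level main}. Writing $P_0$ for the disjoint union of the two chains $\alpha_1\covered\cdots\covered\alpha_{m-1}$ and $\beta_1\covered\cdots\covered\beta_{n-1}$, and $P_1$ for the poset obtained from $P_0$ by adjoining the covering relations that encode the inside corners of $Y$, I would first record that $R_2(Y)$ is the Hibi ring on the lattice $J(P_1)$, whose poset of \joinirred\ elements is $P=\{x_0\}\cup P_1$ with $x_0$ a new bottom lying below $\alpha_1$ and $\beta_1$. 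Thus $P^+$ has $x_0$ at the bottom, $\infty$ at the top, the two chains between them with $x_0\covered\alpha_1$, $x_0\covered\beta_1$, $\alpha_{m-1}\covered\infty$, $\beta_{n-1}\covered\infty$, plus the corner relations. The precise dictionary I would fix is: an inside upper corner from $\alpha_s\covered\beta_t$ sits at $(m-s,t)$, and an inside lower corner from $\beta_{t'}\covered\alpha_{s'}$ sits at $(m-s'+1,t'+1)$. Since a \gor\ ring is level, $R_2(Y)$ is non-\gor\ and \agor\ iff it is either level, non-\gor\ and \agor\ (governed by Theorem \ref{thm:level main}) or non-level and \agor\ (governed by Theorem \ref{thm:non level main}); as these theorems are equivalences, the dictionary transports them into an equivalence for $R_2(Y)$.

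For the level case I would show that condition \ref{item:chain r} of Theorem \ref{thm:level main}, that $P^+\setminus\{z\}$ is a chain, forces one of the two chains to be a single element; since the chains have $m-1$ and $n-1$ elements and $m\le n$, this means $m=2$ and $z=\alpha_1$. Here the hypothesis that every indeterminate of $Y$ lies in a $2$-minor does the essential work: adjoining any corner relation when $m=2$ deletes the lower-right part of the second row, leaving entries of the first row that lie in no $2\times2$ box of $Y$; hence no corner may be added and $Y=X$. Finally condition \ref{item:non pure} reads $\rank[x_0,\alpha_1]+\rank[\alpha_1,\infty]=2<r=n$, i.e.\ $n\ge3$. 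This reproduces condition (1) exactly.

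For the non-level case I would match $P$ with Theorem \ref{thm:non level main}; write $\hat m$ for its parameter, so that its chains $z_1\covered\cdots\covered z_{\hat m}$ and $z'_1\covered\cdots\covered z'_{\hat m}$ have $\hat m$ elements each. Since in the ladder the two chains have $m-1$ and $n-1$ elements and must be equal, we get $m=n$ and $\hat m=m-1$; as the part of $P^+$ below the split and above the merge reduces to $x_0$ and $\infty$, the tails vanish (the $n,n'$ of Theorem \ref{thm:non level main} are $0$), and $\hat m\ge3$ becomes $m=n\ge4$. Identifying $(z_\bullet,z'_\bullet)$ with $(\alpha_\bullet,\beta_\bullet)$, the always-present relation $z'_1\covered z_{\hat m}$ becomes $\beta_1\covered\alpha_{m-1}$, the inside lower corner $(2,2)$; the relation $z_1\covered z'_{\hat m}$ of \ref{item:sym cover} becomes $\alpha_1\covered\beta_{m-1}$, the inside upper corner $(m-1,m-1)$; and the relations $z_{i_j}\covered z'_{i_j+1}$ of \ref{item:pure cover} become $\alpha_{i_j}\covered\beta_{i_j+1}$, inside upper corners lying on $x+y=m+1$ (with $p=0$ meaning no upper corner, a one-sided ladder). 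The clause ``other covering relations'' of Theorem \ref{thm:non level main} is exactly the statement that no further corners occur, so the lower corner is precisely $(2,2)$ and the upper corners are precisely the single corner $(m-1,m-1)$ or corners on $x+y=m+1$. This is condition (2); transposing the square matrix $X$ (equivalently swapping the roles of $\alpha$ and $\beta$, i.e.\ of $z$ and $z'$) exchanges upper and lower corners and yields condition (3), and these two assignments exhaust the choices.

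The step I expect to be most delicate is the careful bookkeeping around the $2$-minor hypothesis together with the indexing of corners. I would need to verify, in both directions, that ``every indeterminate lies in a $2$-minor'' is exactly the non-degeneracy that forbids \joinirred\ elements $x$ with $\starcpx_P(x)=P$ (equivalently, trivial rows or columns of $Y$), so that the reduced poset is genuinely the two-chain poset of Theorem \ref{thm:non level main} or the chain-plus-point poset of Theorem \ref{thm:level main}, with no hidden tails. Pinning down the corner coordinates and the line $x+y=m+1$ against the covering relations $\alpha_s\covered\beta_t$ and $\beta_{t'}\covered\alpha_{s'}$ is otherwise routine, but must be done with care, since an off-by-one there would misstate the three cases; once the dictionary is fixed, the equivalences above are immediate from Theorems \ref{thm:level main} and \ref{thm:non level main}.
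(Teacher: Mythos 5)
Your overall route is exactly the paper's: the paper's own ``proof'' of Theorem \ref{thm:ladder} is nothing more than the ladder--poset dictionary in the preceding paragraph followed by the sentence ``by the results of previous sections, we see the following fact,'' and your corner bookkeeping (upper corner $(m-s,t)$ from $\alpha_s\covered\beta_t$, lower corner $(m-s'+1,t'+1)$ from $\beta_{t'}\covered\alpha_{s'}$, the line $x+y=m+1$, and the swap of the two chains giving condition (3) from condition (2)) is consistent with it and in fact more explicit than anything the paper writes down.

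There is, however, one step that would fail as you have ordered it. In the level case you assert that condition \ref{item:chain r} of Theorem \ref{thm:level main} alone --- that $P^+\setminus\{z\}$ is a chain --- forces one of the two chains to be a singleton, hence $m=2$, and you bring in the $2$-minor hypothesis only afterwards to conclude $Y=X$. This implication is false without the hypothesis: adjoined corner relations can linearize the poset even when $m,n\geq 3$. For instance, take $m=n=3$ and the single upper corner $(2,1)$, i.e.\ the added relation $\alpha_1\covered\beta_1$; then $P^+\setminus\{\alpha_2\}$ is the chain $x_0<\alpha_1<\beta_1<\beta_2<\infty$ of length $r=4$ and $\rank[x_0,\alpha_2]+\rank[\alpha_2,\infty]=3<4$, so Theorem \ref{thm:level main} applies and this Hibi ring is level, non-Gorenstein and almost Gorenstein, yet the ladder is not on the list in Theorem \ref{thm:ladder}. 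It is excluded only because the entry $(3,1)$ of that ladder lies in no $2$-minor. So the hypothesis that every indeterminate lies in a $2$-minor must be invoked already to rule out every $m\geq 3$ configuration in the level case, not merely to show $Y=X$ once $m=2$ is known; the analogous point arises in the non-level case, where your claim that the tails $n=n'$ of Theorem \ref{thm:non level main} vanish is not automatic but again rests on the $2$-minor hypothesis (e.g.\ the corner $\beta_1\covered\alpha_1$ creates a bottom tail and simultaneously leaves $(m,1)$ outside every $2$-minor). You do flag this verification as the delicate step, and it is routine once stated correctly, so the gap is fixable; but the level-case deduction of $m=2$ as you wrote it is not valid.
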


\begin{remark}\rm
Suppose $t\geq 2$.
Taniguchi \cite{tan} showed that $R_t(X)$ is non-\gor\ and \agor\
if and only if $m=2$, $n\geq 3$ and $t=2$.
\end{remark}

\end{document}